\documentclass{aims}
\usepackage{booktabs}
\usepackage{enumerate}
\usepackage{undertilde}
\usepackage{array}
\newcommand{\PreserveBackslash}[1]{\let\temp=\\#1\let\\=\temp}
\newcolumntype{C}[1]{>{\PreserveBackslash\centering}p{#1}}
\newcolumntype{R}[1]{>{\PreserveBackslash\raggedleft}p{#1}}
\newcolumntype{L}[1]{>{\PreserveBackslash\raggedright}p{#1}}
\usepackage{amsmath}
 \usepackage{paralist}
  \usepackage{graphics} 
  \usepackage{epsfig} 
\usepackage{graphicx}
\usepackage{epstopdf}
 \usepackage[colorlinks=true]{hyperref}
 \usepackage{undertilde}
\hypersetup{urlcolor=blue, citecolor=red}

  \textheight=8.2 true in
   \textwidth=5.0 true in
    \topmargin 30pt
     \setcounter{page}{1}



\newtheorem{theorem}{Theorem}[section]

\newtheorem{lemma}[theorem]{Lemma}

\theoremstyle{definition}
\newtheorem{definition}[theorem]{Definition}

\title[Fractional Predator-Prey Reaction-Diffusion Model]{Positivity and Boundedness Preserving Schemes
 for Space-Time Fractional Predator-Prey Reaction-Diffusion Model}

\author{Yanyan Yu, Weihua Deng, and Yujiang Wu}

\subjclass{Primary: 65M06, 26A33; Secondary: 45M20.}
 \keywords{semi-implicit scheme, fractional predator-prey model, fractional centered difference, WSGD scheme, convergence.}

 \email{yuyanyan2009@lzu.edu.cn}
 \email{dengwh@lzu.edu.cn}
 \email{myjaw@lzu.edu.cn}


\begin{document}

\maketitle

{
 \centerline{School of Mathematics and Statistics, Lanzhou University, }
   \centerline{ Lanzhou, 730000, People's Republic of China }
} 

%

\bigskip


\begin{abstract}
The semi-implicit schemes for the nonlinear predator-prey reaction-diffusion model with the space-time fractional derivatives are discussed, where the space fractional derivative is discretized by the fractional centered difference and WSGD scheme. The stability and convergence of the semi-implicit schemes are analyzed in the $L_\infty$ norm.  We theoretically prove that the numerical schemes are stable and convergent without the restriction on the ratio of space and time stepsizes and numerically further confirm that the schemes have first order convergence in time and second order convergence in space. Then we discuss the positivity and boundedness properties of the analytical solutions of the discussed model, and show that the numerical solutions preserve the positivity and boundedness. The numerical example is also presented.
\end{abstract}

\section{Introduction}

The predator-prey model, also known as the Lotka-Volterra equation, is a pair of first-order nonlinear ordinary differential equations frequently used to describe the dynamics of biological system in which two species interact, one as the predator and the other as prey \cite{Brauer00}; and the unknown variables usually denote certain measure of total population.  To interpret the unknown variables as spacial densities so as to allow the population size to vary throughout the considered region, Conway and Smoller introduce diffusion terms to the predator-prey model which allows diffusing as well as interacting each other \cite{Conway77}; and then the predator-prey reaction-diffusion model is obtained. With the development of the predator-prey model, it seems nature to introduce diffusion terms to the corresponding model, e.g., the Michaelis-Menten-Holling predator-prey reaction-diffusion model \cite{Cavani1994,aly2011}; and more general other models \cite{Bartumeus2001,Pang2003,Wang2003,ana2011}.

Anomalous diffusion, including subdiffusion and superdiffuion, is also a diffusion process, but its mean squared displacement (MSD) is nonlinear with respect to time $t$, in contract to the classical diffusion process, in which the MSD is linear \cite{Metzler00}; nowadays, it is widely recognized that the anomalous diffusion is ubiquitous, e.g.,  diffusion through porous media, protein diffusion within cells, and also being found in many other biological systems. So it seems reasonable/nature to introduce the anomalous diffusion to the predator-prey model. The subdiffusion is introduced to the Michaelis-Menten-Holling predator-prey model in \cite{yu2013} to get a new model; it is proved that the solution of the model is positive and bounded; and the numerical schemes preserving the positivity and boundedness are detailedly discussed. Here we introduce both the subdiffusion and superdiffusion to the Michaelis-Menten-Holling predator-prey model, and the system can be written as

%
%
%

\begin{equation}\label{Eq1.1}
\begin{array}{llll}
\displaystyle\frac{\partial^\alpha N}{\partial t^\alpha} &= & D_1\displaystyle\frac{\partial^\beta N}{\partial |x|^\beta}+N\left(1-N-\frac{\varrho P}{P+N}\right), & x \in (l,r),\,\,t>0,\\
\\
\displaystyle\frac{\partial^\alpha P}{\partial t^\alpha} &= &D_2
\displaystyle\frac{\partial^\beta P}{\partial |x|^\beta}+\sigma P
\left(-\frac{\gamma+ \kappa\delta P}{1+\kappa P}+\frac{N}{P+N}\right),
& x \in (l,r),\,\,t>0,
\end{array}
\end{equation}
with the Caputo derivative in time and Riesz space fractional derivetive, where  $\varrho$, $\sigma$  and $\kappa$  are positive real numbers and $N$ and $P$ denote the population densities of prey and predator  respectively. Based on the practical applications, we are interested in the solutions of (\ref{Eq1.1}) with the nonnegative initial conditions
 \begin{equation}\label{Eq1.1I}
 N(x,0)=g_1(x) \ge 0,~~~ P(x,0)=g_2(x) \ge 0, ~~~ x \in (l,r),
  \end{equation}
and the homogeneous Neumann boundary conditions
 \begin{equation}\label{Eq1.2}
(\partial {N(x,t)}/\partial x) |_{x=l \rm{~and~} r, \rm{~respectively}}=(\partial {P(x,t)}/\partial x) |_{x=l \rm{~and~} r, \rm{~respectively}}=0.
 \end{equation}
 The positive constants $\gamma$ and $ \delta$ in the coupled equations denote the minimal mortality and the limiting mortality of the predator, respectively. Throughout the paper, we assume that $\gamma$ satisfies the natural condition  $0<\gamma \le \delta$ and consider the case of the diffusion constants $D_i>0,\, i=1,2$.

The numerical methods for solving fractional partial differential equations are developing fast; most of them focus on fractional diffusion equations, including the space fractional diffusion equation \cite{Tadjeran2006,Yang2010} and the time fractional diffusion equation \cite{Chen2007,Cui2009, Gao2011,Lin2007,Yuste2005, Deng2007JCP}. Because of the stability issue, the space fractional derivative is usually approximated by the shifted Gr\"{u}nwald-Letnikov definition with the finite stepsize; and the truncation error is first-order. Recently, the second-order discretizations for space fractional derivative appear: based on the so-called ``fractional centered difference", Ortigueira gets the second-order approximation for the Riesz fractional derivative \cite{Ortigueira2006}, and its applications can be seen in \cite{Celik2012,Wang2013}; Tian et al obtain the so-called WSGD second order approximation for both the left and right Riemann-Liouville derivatives \cite{Tian2012}, and its compact version has third-order accuracy \cite{Zhou2013}.

This paper first proves that the analytical solutions of the space-time fractional predator-prey reaction-diffusion model (\ref{Eq1.1})-(\ref{Eq1.2}) are positive and bounded; and then designs the numerical schemes to solve it. The space fractional derivative is discretized by the fractional centered difference \cite{Ortigueira2006} and the WSGD operators \cite{Tian2012}, respectively. We prove that both the two obtained schemes have second-order accuracy in space and preserve the positivity and boundedness of the analytical solutions. In particular, the stability of the numerical scheme without the restriction on the ratio of the space and time stepsizes is also strictly proved. And the numerical example is provided to confirm the theoretical results.


The outline of this paper is as follows. In Section 2, we present two finite difference schemes for the space-time fractional predator-prey reaction-diffusion model. The detailed discussions on the stability and convergence with first-order in time and second-order in space are given in Section 3. In Section 4, we prove that the analytical solutions of the discussed model are positive and bounded; and the provided two schemes preserve the positivity and boundedness. The numerical experiments to confirm the convergent orders and the positivity and boundedness preserving are performed in Section 5. And we conclude the paper with some discussions in the last section.

%

\section{Numerical schemes for the space-time fractional predator-prey reaction-diffusion model}

As mentioned in the introduction section, the model we discuss is as follows:
\begin{equation}\label{2.1}
\begin{array}{llll}
\displaystyle\frac{\partial^\alpha N}{\partial t^\alpha} &= & D_1\displaystyle\frac{\partial^\beta N}{\partial |x|^\beta}+N\left(1-N-\frac{\varrho P}{P+N}\right), & x \in (l,r),\,\,0<t\leq T,\\
\\
\displaystyle\frac{\partial^\alpha P}{\partial t^\alpha} &= &D_2
\displaystyle\frac{\partial^\beta P}{\partial |x|^\beta}+\sigma P
\left(-\frac{\gamma+\kappa\delta   P}{1+\kappa P}+\frac{N}{P+N}\right),
& x \in (l,r),\,\,0<t\leq T,
\end{array}
\end{equation}
with $0<\alpha<1$, $1<\beta<2$, and use the following
initial conditions
\begin{equation}\label{2.2}
 N(x,0)=g_1(x),\, P(x,0)=g_2(x),\qquad x \in [l,r]
\end{equation}
and the homogeneous boundary conditions
\begin{equation} \label{2.3}
  \frac{ \partial {N(x,t)}}{\partial x} |_{x=l }=\frac{ \partial {N(x,t)}}{\partial x} |_{x=r }=  \frac{ \partial {P(x,t)}}{\partial x} |_{x=l }=\frac{ \partial {P(x,t)}}{\partial x} |_{x= r }=0,\quad 0< t\leq T.\\
\\
\end{equation}
In (\ref{2.1}), the time fractional operator $\frac{\partial^\alpha u(x,t)}{\partial t^\alpha}$ denotes the
 Caputo fractional derivative
\begin{equation*}
\frac{\partial^\alpha u(x,t)}{\partial
 x ^\alpha}=\frac{1}{ \Gamma(1-\alpha)} \int^{t}_{0}  \frac{\partial u(x,s)}{\partial s}\frac{1}{(t-s)^\alpha}ds ,
\end{equation*}
 and $\frac{\partial^\beta u}{\partial |x|^\beta}$ denotes the Riesz fractional derivative
%
\begin{equation}
\frac{\partial^\beta u(x,t)}{\partial
|x|^\beta}=-\frac{1}{2cos(\beta \pi /2)\Gamma(2-\beta)}\frac{d^2}{dx^2}\int^{r}_{l} |x-\xi|^{1-\beta}u(\xi,t)d\xi .
\end{equation}

For ease of presentation, we uniformly divide the spacial domain
$[l,r]$ into $M$ subintervals with stepsize $h$ ($=(r-l)/M$) and the time
domain $[0,T]$ into $N$ subintervals with steplength $\tau$ ($=T/N$).
Let $x_{i}=l+i h\,(i=0,1,\cdots,M)$, $t_{k}=k\tau \,( k=0,1,\cdots,N)$, and denote the grid function as $u^{k}_{i}=u(x_i,t_k)$.
We use the discrete scheme of the Caputo derivative in the following form \cite{Lin2007}
\begin{equation}\label{2.6}
D^{\alpha}_{\tau}u_i^{k}=\frac{\tau^{-\alpha}}{\Gamma(2-\alpha)}\big[u_i^{k}-\sum^{k-1}_{n=1}(b_{k-n-1}-b_{k-n})u_i^{n}-b_{k-1}u_i^{0}\big],
\end{equation}
where $b_{n}=(n+1)^{1-\alpha}-n^{1-\alpha}>0$, and $1=b_0>b_1>\cdots>b_{k-1} >
(1-\alpha)k^{-\alpha}$.  
Note that  the above discrete scheme has  $2-\alpha$  order accuracy in time, i.e.,
\begin{equation}\label{lemma2.3}
\frac{\partial^\alpha u(x_i,t)}{\partial
t^\alpha}\big|_{t=t_k}=D^{\alpha}_{\tau}u_i^k+\mathcal{O}(\tau^{2-\alpha}).
\end{equation}
For the Riesz space fractional derivative, we introduce two second order finite difference approximation schemes (fractional centered difference scheme and second order WSGD scheme) in the next parts.

\subsection{Fractional centered difference scheme}
In this part, we use the fractional centered difference  \cite{Ortigueira2006} to discretize the spatial factional derivative.
 The Riesz fractional derivative can be redefined as
\begin{equation}\label{fcd}
\frac{\partial^\beta u(x,t)}{\partial
|x|^\beta}=\lim_{h\rightarrow0} \left(-\frac{1}{h^{\beta}}\sum_{j=[(l-r+x)/h]}^{[x/h]}\frac{(-1)^j\Gamma(\beta+1)}{\Gamma(\frac{\beta}{2}-j+1)\Gamma(\frac{\beta}{2}+j+1)}u(x-jh,t)\right).
\end{equation}
Denoting the weights as
 $$g_j=\frac{(-1)^j\Gamma(\beta+1)}{\Gamma(\frac{\beta}{2}-j+1)\Gamma(\frac{\beta}{2}+j+1)},$$
we get the fractional centered difference approximation for the Riesz fractional derivative
%
\begin{equation} \label{2.5}
\delta_x^\beta u_i^k=-\frac{1}{h^\beta}\sum_{j=-M+i}^ig_ju_{i-j}^k,
\end{equation}
where  the weights $g_j$  satisfy the following properties:
\begin{enumerate}
\item $g_0\geq0$,
\item $g_{-j}=g_{j}\leq0$,
\item $g_{j+1}=\left(1-\frac{\beta+1}{\beta/2+j+1}\right)g_j$, and $|g_{j+1}|<|g_j|$,
\item $\sum\limits_{j=-\infty}^{\infty}g_j=0$, and $\sum\limits_{j=-M+i \atop j\neq0}^{i }|g_j| < g_0$.
\end{enumerate}
From \cite{Celik2012}, we know that the accuracy of fractional centered difference approximation is as,
\begin{lemma} \label{lemma2.2}
If $u(x,t_i)\in \mathcal{C}^5[l,r]$, then we have
$$
\frac{\partial^\beta u(x,t_i)}{\partial
|x|^\beta}|_{x=x_i}=\delta_x^\beta u_i^k+\mathcal{O}(h^2),
$$
with   $1<\beta<2$.
\end{lemma}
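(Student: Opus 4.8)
This is the second-order consistency estimate of \c{C}elik and Duman \cite{Celik2012} (refining Ortigueira \cite{Ortigueira2006}), and the natural route is a Fourier-symbol comparison: regard the centered difference as (minus) a discrete convolution, pass to frequency space, and match its symbol against the Fourier symbol $-|\omega|^\beta$ of the Riesz derivative. First I would fix $t=t_i$, extend $u(\cdot,t_i)$ to a $\mathcal{C}^5$ function on the line with enough decay, and note that the whole-line operator $-h^{-\beta}\sum_{j=-\infty}^{\infty} g_j\,u(x-jh,t_i)$ has Fourier transform $-h^{-\beta}G(\omega h)\,\widehat u(\omega)$, where $G(\theta)=\sum_{j=-\infty}^{\infty} g_j e^{-ij\theta}$; the finite sum defining $\delta_x^\beta u_i^k$ is its bounded-domain counterpart, matching the integral-over-$[l,r]$ form of the Riesz derivative (both amount to the zero-extension of $u$), so near-boundary corrections are controlled by the decay of the weights in properties (3)--(4) and do not spoil the order. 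Everything then reduces to understanding the generating function $G$.

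The heart of the matter, and the step I expect to be the main obstacle, is the closed form
\[
G(\theta)=\sum_{j=-\infty}^{\infty}\frac{(-1)^j\,\Gamma(\beta+1)}{\Gamma(\tfrac{\beta}{2}-j+1)\,\Gamma(\tfrac{\beta}{2}+j+1)}\,e^{-ij\theta}=\Big(2\sin\tfrac{|\theta|}{2}\Big)^\beta,\qquad |\theta|\le\pi.
\]
To establish it I would start from $2-2\cos\theta=(1-e^{i\theta})(1-e^{-i\theta})$, write $\big(2\sin\tfrac{\theta}{2}\big)^\beta=(1-e^{i\theta})^{\beta/2}(1-e^{-i\theta})^{\beta/2}$, expand each factor by the binomial series, and collect the coefficient of $e^{-ij\theta}$; a Chu--Vandermonde summation identifies that coefficient with $g_j$. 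The care needed here is to justify absolute convergence of the doubly infinite series and the validity of the closed form up to the endpoints: for $1<\beta<2$ the ratio relation in property (3) yields the decay $g_j=\mathcal{O}(|j|^{-\beta-1})$ that makes the rearrangement and the term-by-term identification legitimate.

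Granting the identity, I set $\theta=\omega h$ and Taylor expand, using $\sin x/x=1-x^2/6+\mathcal{O}(x^4)$:
\[
\frac{1}{h^\beta}\Big(2\sin\tfrac{\omega h}{2}\Big)^\beta=|\omega|^\beta\Big(\frac{\sin(\omega h/2)}{\omega h/2}\Big)^\beta=|\omega|^\beta\Big(1-\frac{\beta}{24}(\omega h)^2+\mathcal{O}((\omega h)^4)\Big).
\]
Thus the symbol of $\delta_x^\beta$ is $-|\omega|^\beta\widehat u(\omega)+h^2 R_h(\omega)$ with $|R_h(\omega)|\le C|\omega|^{\beta+2}|\widehat u(\omega)|$ uniformly for small $h$, and since $-|\omega|^\beta\widehat u(\omega)$ is exactly the transform of the Riesz derivative, inverting the Fourier transform gives
\[
\delta_x^\beta u_i^k-\frac{\partial^\beta u(x,t_i)}{\partial|x|^\beta}\Big|_{x=x_i}=\frac{h^2}{2\pi}\int_{-\infty}^{\infty} e^{i\omega x_i}R_h(\omega)\,d\omega .
\]
Finally I would bound the integral by $C\,h^2\int_{-\infty}^{\infty}|\omega|^{\beta+2}|\widehat u(\omega)|\,d\omega$, which is finite because $u(\cdot,t_i)\in\mathcal{C}^5$ forces $|\widehat u(\omega)|\le C(1+|\omega|)^{-5}$ and $\beta+2-5<-1$ for $\beta<2$. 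This yields the claimed $\mathcal{O}(h^2)$ bound; the role of the $\mathcal{C}^5$ hypothesis is precisely to supply the frequency decay that upgrades the pointwise symbol expansion into a uniform error estimate.
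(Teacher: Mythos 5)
Your argument is correct and is essentially the proof of the cited reference: the paper itself offers no proof of this lemma, simply quoting the result from \c{C}elik and Duman \cite{Celik2012}, whose derivation is exactly your Fourier-symbol computation (the generating-function identity $\sum_j g_j e^{-ij\theta}=\bigl(2\sin\tfrac{|\theta|}{2}\bigr)^{\beta}$ via Chu--Vandermonde, the expansion $\bigl(\tfrac{\sin x}{x}\bigr)^{\beta}=1-\tfrac{\beta}{6}x^{2}+\mathcal{O}(x^{4})$, and the frequency decay supplied by the $\mathcal{C}^5$ hypothesis). The only point to flag is that your reduction of the bounded-interval case to the whole-line symbol calculus presumes the zero extension of $u(\cdot,t_i)$ is still regular enough near $x=l,r$; this hypothesis is implicit in \cite{Celik2012} as well, so your write-up is faithful to the source the paper relies on.
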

If the fractional centered difference approximation is substituted into the system of reaction-diffusion equation (\ref{2.1}),
then the resulting semi-implicit finite difference scheme is
\begin{equation}\label{2.8}
\begin{array}{llll}
\displaystyle D^{\alpha}_{\tau}N_i^{k+1}&= &D_1 \displaystyle\delta_x^\beta N_i^{k+1}+N_i^k\left(1-N_i^k-\frac{\varrho P_i^k}{P_i^k+N_i^k}\right), \\
\\
\displaystyle D^{\alpha}_{\tau}P_i^{k+1} &= &D_2
\displaystyle\delta_x^\beta P_i^{k+1}+\sigma P_i^k
\left(-\frac{\gamma+\kappa\delta   P_i^k}{1+\kappa P_i^k}+\frac{N_i^k}{P_i^k+N_i^k}\right),
\end{array}
\end{equation}
with initial conditions
\begin{equation}\label{2.9}
N_i^0=g_1(x_i),\, P_i^0=g_2(x_i).
\end{equation}
For guaranteeing the second-order accuracy in space, we give the three point interpolation scheme for the Neumann boundary conditions:
\begin{equation}\label{2.10}
\begin{array}{llll}
 \quad\quad\quad  N_2^k-4N_1^k+3N_0^k&= 0, \quad  \displaystyle& P_2^k-4P_1^k+3P_0^k=0,\quad 0\leq k \leq N,\\\\
  N_{M-2}^k-4N_{M-1}^k+3N_M^k&= 0, \quad \displaystyle& P_{M-2}^k-4P_{M-1}^k+3P_M^k=0,\quad 0\leq k \leq N.
\end{array}
\end{equation}
Eq. (\ref{2.8})-(\ref{2.10}) may be rearranged and written as matrix form and solved to get the numerical solution at the time step $t_{k+1}$:
  $$
\begin{aligned}
 (I+A_1)N^{k+1}&=\sum_{j=1}^{k}(b_j-b_{j+1})N^{k-j}+b_kN^0+\mu  N^k\left(1-N^k-\frac{\varrho P^k}{P^k+N^k}\right),\\
 (I+A_2)P^{k+1}&=\sum_{j=1}^{k}(b_j-b_{j+1})P^{k-j}+b_kP^0+\mu \sigma P^k
\left(-\frac{\gamma+\kappa\delta   P^k}{1+\kappa P^k}+\frac{N^k}{P^k+N^k}\right),
\end{aligned}
  $$
  where $N^{k}=(N_1^{k},N_2^{k},\cdots,N_{M-1}^{k})$,  $P^{k}=(P_1^{k},P_2^{k},\cdots,P_{M-1}^{k})$ and $A_i$ is a $(M-1)\times(M-1)$
  square matrix
   \begin{equation}\label{2.11}       
A_i=\frac{\mu D_i}{h^\beta}\left(                 
  \begin{array}{ccccc}   
     g_0 +\frac{4}{3} g_{1}   & g_{-1} - \frac{1}{3} g_{1} & g_{-2} &  \cdots    &g_{-M+2} +\frac{4}{3} g_{-M+1}  \\  
    g_1  +\frac{4}{3} g_{2}    & g_0 -\frac{1}{3} g_{2}    &  g_{-1}  &           & g_{-M+3}+\frac{4}{3} g_{-M+2}   \\  
    \vdots      &        &          &   \ddots           &   \\
    g_{M-2} +\frac{4}{3} g_{M-1} &  g_{M-3}-\frac{1}{3} g_{M-1} & g_{M-4}&            & g_0 +\frac{4}{3} g_{-1}\\
  \end{array}
\right).                 
\end{equation}

\subsection{Second order WSGD scheme}
 The other equivalent definition of the Riesz fractional derivative $\frac{\partial^\beta u(x,t)}{\partial|x|^\beta }$ is
\begin{equation}\label{sch21}
\frac{\partial^\beta u(x,t)}{\partial|x|^\beta }=\frac{-1}{2cos(\beta\pi/2)}(_{l}D_x^\beta u(x,t)+_xD_{r}^\beta u(x,t)),
\end{equation}
where $_{l}D_x^\beta u(x,t)$ and $_xD_{r}^\beta u(x,t)$ denote the left and right Riemann-Liouville fractional derivatives of the function $u(x,t)$, respectively.
According to the discussions of \cite{Tian2012},  we present the discrete approximations of the left and right Riemann-Liouville fractional derivatives as follows:
\begin{equation}\label{sch22}
\begin{array}{c}
\displaystyle_lD^\beta_xu(x_i)=\frac{1}{h^\beta}\sum_{j=0}^{i+1}w_j u(x_{i-j+1})+\mathcal{O}(h^2),\\\\
\displaystyle_xD^\beta_ru(x_i)=\frac{1}{h^\beta}\sum_{j=0}^{M-i+1}w_j u(x_{i+j-1})+\mathcal{O}(h^2),
\end{array}
\end{equation}
where $w_0=\alpha/2$, $w_j=\Gamma(j-\beta-1)/(\Gamma(-\beta)\Gamma(j))\cdot (1-(\beta/2)\cdot(\beta+1)/j)$. 
 After substituting (\ref{sch22}) into (\ref{sch21}) and merging the same terms, we get the discrete approximation of the Riesz fractional derivative
 $$
 \delta_xu_i^k=\frac{-1}{h^\beta}\sum_{j=-M+i}^{i}\theta_ju_{i-j}^k,
 $$
where the weights $\theta_j$ satisfy the following properties
\begin{enumerate}
\item  $\theta_0=\frac{w_1}{cos(\beta\pi/2)}=\frac{2-\beta-\beta^2}{2cos(\beta\pi/2)}>0$,\\\\
\item  $\theta_1=\theta_{-1}=\frac{w_0+w_2}{2cos(\beta\pi/2)}=\frac{\beta(\beta+2)(\beta-1)}{8cos(\beta\pi/2)}<0$,\,\\\\
  for $|j|>1$, $\theta_j=\theta_{-j}=\frac{w_{j+1}}{2cos(\beta\pi/2)}
=\frac{ \Gamma(j-\beta)}{2cos(\beta\pi/2)\Gamma(-\beta)\Gamma(j+1)}\left(1-\frac{\beta(\beta+1)}{2(j+1)}\right)  \leq0$,\\\\
\item  $\theta_{j+1}=\frac{(-\beta + j) (\beta + \beta^2 - 2 (2 + j))}{(2 + j) (\beta + \beta^2 - 2 (1 + j))}\theta_{j} $, for $|j|\geq2$,
 and $|\theta_{1} |>|\theta_{2} | \geq|\theta_{3} |\geq\cdots  $,\\\\
\item  $\sum\limits_{j=-\infty}^{\infty}\theta_j=0$, and $\sum\limits_{j=-M+i \atop j\neq0}^{i }|\theta_j|< \theta_0$.
\end{enumerate}
In view of the approximations (\ref{sch22}), we know that the above discrete scheme of the Riesz fractional derivative also has second-order accuracy
\begin{equation}\label{sch23}
\frac{\partial^\beta u(x,t)}{\partial|x|^\beta }\big|_{x=x_k,t=t_j}=\frac{-1}{h^\beta}\sum_{j=-M+i}^{i}\theta_ju_{i-j}^k+\mathcal{O}(h^2).
\end{equation}
If the second-order WSGD discretization is substituted into the reaction-diffusion equations (\ref{2.1}),
we obtain the new semi-implicit WSGD finite difference scheme. And the same initial and boundary discretizations (\ref{2.9}) and (\ref{2.10}) are used. Then we can get the same formulations for the WSGD scheme as the ones for the fractional centered difference scheme (\ref{2.8})-(\ref{2.11}).

The semi-implicit fractional centered difference and the semi-implicit WSGD scheme have the same structure, and in particular, their discretized weights have the similar properties. So in the following, the two schemes have the same analyses, and we just focus on the first scheme.
%
%

\section{Stability and convergence of the numerical schemes }
Now we first denote the two nonlinear terms  by
\begin{equation} \label{Nonl1}
f_1(N,P)=N\left(1-N-\frac{\varrho P}{P+N}\right)
\end{equation}
 and
\begin{equation} \label{Nonl2}
 f_2(N,P)=\sigma P\left(-\frac{\gamma+\kappa\delta P}{1+\kappa P}+\frac{N}{P+N}\right)
\end{equation}
 for convenience.
And assume  that they
satisfy  local Lipschitz condition, i.e.,
 there exists a positive constant $L$ such that
$$
|f_1(N_1,P_1)-f_1(N_2,P_2)|\leq L(|N_1-N_2|+|P_1-P_2|)
$$
and
$$
|f_2(N_1,P_1)-f_2(N_2,P_2)|\leq L(|N_1-N_2|+|P_1-P_2|),
$$
when $|N_1-N_2|\leq\epsilon_0$ and $|P_1-P_2|\leq\epsilon_0$ for a given positive constant $\epsilon_0$.
Then the discrete scheme (\ref{2.8}) can be rewritten as the following form
\begin{equation}\label{3.1}
\begin{aligned}
\left(\frac{1}{\mu}+\frac{D_1g_0}{h^\beta}\right)N_i^{k+1}=&\frac{1}{\mu}\sum_{n=1}^k(b_{k-n}-b_{k-n+1})N_i^n+\frac{1}{\mu}b_kN_i^0\\
  &-\frac{D_1}{h^\beta}\sum_{j=-M+i\atop j\neq0}^ig_jN_{i-j}^{k+1}+f_1(N_i^k,P_i^k),\\\\
\left(\frac{1}{\mu}+\frac{D_2g_0}{h^\beta}\right)P_i^{k+1}=&\frac{1}{\mu}\sum_{n=1}^k(b_{k-n}-b_{k-n+1})P_i^n+\frac{1}{\mu}b_kP_i^0\\ &-\frac{D_2}{h^\beta}\sum_{j=-M+i\atop j\neq0}^ig_jP_{i-j}^{k+1}+f_2(N_i^k,P_i^k),\\\\
\end{aligned}
\end{equation}
where $\mu=\Gamma(2-\alpha)\tau^{\alpha}$.
 In this paper, we use $L_\infty$ norm of $\{u_i^k\}_{i=1}^{M-1}$ which is defined as
$$
\|u^k\|=\max_{1\leq i\leq M-1}|u^k_i|.
$$
Let $(\bar{N}_i^k,\bar{P}_i^k)$ be the approximate solution to the numerical scheme
and denote $\epsilon_i^k=N_i^k-\bar{N}_i^k$ and  $\varepsilon_i^k=P_i^k-\bar{P}_i^k$.
Then  under the small perturbations, there exists the following numerical stability result.

\begin{theorem}\label{Th3.1}
The numerical schemes (\ref{2.8})-(\ref{2.10}) are stable and there
exist
\begin{equation}\label{3.2}
\begin{array}{llll}
\displaystyle
\| \epsilon^k \|\leq\frac{1}{(1-\alpha)-T^{\alpha}\Gamma(2-\alpha)L}(\| \epsilon^0
\|+\| \varepsilon^0
\|),\\\\
\displaystyle \| \varepsilon^k \|\leq\frac{1}{(1-\alpha)-T^{\alpha}\Gamma(2-\alpha)L}(\| \epsilon^0
\|+\| \varepsilon^0
\|) ,
\end{array}
\end{equation}
when $T < (1/(\Gamma(1-\alpha)L))^{1/\alpha} $.
\end{theorem}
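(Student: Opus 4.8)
The plan is to pass to error equations, collapse them to a scalar inequality in the $L_\infty$ norm in which all level-$(k+1)$ spatial contributions cancel, and then close a discrete induction driven by the convexity and the lower bound of the time weights $b_n$.

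First I would subtract the scheme satisfied by the perturbed solution $(\bar{N}_i^{k+1},\bar{P}_i^{k+1})$ from (\ref{3.1}). Writing $\epsilon_i^{k+1}=N_i^{k+1}-\bar{N}_i^{k+1}$ (and symmetrically for $\varepsilon$), this gives
\begin{equation*}
\left(\frac{1}{\mu}+\frac{D_1 g_0}{h^\beta}\right)\epsilon_i^{k+1}=\frac{1}{\mu}\sum_{n=1}^{k}(b_{k-n}-b_{k-n+1})\epsilon_i^{n}+\frac{1}{\mu}b_k\epsilon_i^0-\frac{D_1}{h^\beta}\sum_{j=-M+i,\,j\neq0}^{i}g_j\epsilon_{i-j}^{k+1}+\big(f_1(N_i^k,P_i^k)-f_1(\bar{N}_i^k,\bar{P}_i^k)\big).
\end{equation*}
I would then fix the index $i$ at which $|\epsilon_i^{k+1}|=\|\epsilon^{k+1}\|$, take absolute values, and estimate the last two terms using the local Lipschitz bound and the fourth weight property $\sum_{j\neq0}|g_j|<g_0$ (valid identically for the WSGD weights $\theta_j$). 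Since $g_j\le0$ for $j\neq0$, the spatial sum is bounded by $\frac{D_1}{h^\beta}\sum_{j\neq0}|g_j|\,\|\epsilon^{k+1}\|<\frac{D_1 g_0}{h^\beta}\|\epsilon^{k+1}\|$, which is dominated by the matching term on the left and can therefore be absorbed. Multiplying through by $\mu$ leaves the ratio-free recursion
\begin{equation*}
\|\epsilon^{k+1}\|\le\sum_{n=1}^{k}(b_{k-n}-b_{k-n+1})\|\epsilon^n\|+b_k\|\epsilon^0\|+\mu L\big(\|\epsilon^k\|+\|\varepsilon^k\|\big),
\end{equation*}
together with its counterpart for $\|\varepsilon^{k+1}\|$. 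This cancellation is the conceptual crux: because $g_0-\sum_{j\neq0}|g_j|>0$ holds with no reference to $h$ or $\tau$, no stability restriction on the ratio of the stepsizes appears.

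It remains to close an induction for $\|\epsilon^k\|,\|\varepsilon^k\|\le C(\|\epsilon^0\|+\|\varepsilon^0\|)$. Here I would use two properties of the time weights: the telescoping identity $\sum_{n=1}^{k}(b_{k-n}-b_{k-n+1})+b_k=1$, so the weights act as a convex combination that reproduces the inductive bound on the history and initial terms, and the lower bound $b_k>(1-\alpha)(k+1)^{-\alpha}\ge(1-\alpha)N^{-\alpha}=(1-\alpha)(\tau/T)^\alpha$. Combined with $\mu=\Gamma(2-\alpha)\tau^\alpha$, the latter makes the nonlinear contribution uniformly controllable through $\mu L/b_k\le\Gamma(2-\alpha)T^\alpha L/(1-\alpha)$, independently of $k$ and of the stepsizes. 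Feeding this into the recursion produces a self-referential inequality for $C$ whose solvability, i.e.\ positivity of the denominator $(1-\alpha)-\Gamma(2-\alpha)T^\alpha L$, is exactly the hypothesis $T<(1/(\Gamma(1-\alpha)L))^{1/\alpha}$, and gives the bound (\ref{3.2}). The main obstacle I anticipate is handling the coupling of the two error sequences through the Lipschitz term while keeping $C$ uniform in $k$; I would manage this by carrying a simultaneous induction on $\|\epsilon^k\|$ and $\|\varepsilon^k\|$ (or on their sum), bounding the level-$k$ reaction term by the already-established bound at that level.
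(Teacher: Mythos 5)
Your outline is essentially the paper's proof: subtract the two solutions, absorb the implicit spatial term on the left using the strict diagonal dominance $\sum_{j\neq 0}|g_j|<g_0$, arrive at the ratio-free recursion $\|\epsilon^{k+1}\|\le\sum_{n=1}^k(b_{k-n}-b_{k-n+1})\|\epsilon^n\|+b_k\|\epsilon^0\|+\mu L(\|\epsilon^k\|+\|\varepsilon^k\|)$, and close a simultaneous induction using the telescoping of the $b_n$, the lower bound $b_k^{-1}<(k+1)^\alpha/(1-\alpha)$, and $\mu=\Gamma(2-\alpha)\tau^\alpha$ so that $\mu L b_k^{-1}$ is controlled by $\Gamma(1-\alpha)T^\alpha L$ independently of the stepsizes. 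The induction step and the role of the smallness condition on $T$ match the paper.

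There is, however, one concrete point where your sketch as written does not go through and where the paper spends most of its effort. The level-$(k+1)$ spatial sum $\sum_{j\neq0}g_j\epsilon_{i-j}^{k+1}$ runs over $j=-M+i,\dots,i$ and therefore involves the boundary values $\epsilon_0^{k+1}$ and $\epsilon_M^{k+1}$, which are \emph{not} among the components entering $\|\epsilon^{k+1}\|=\max_{1\le i\le M-1}|\epsilon_i^{k+1}|$. You cannot simply bound the sum by $\sum_{j\neq0}|g_j|\,\|\epsilon^{k+1}\|$: using the three-point Neumann interpolation (\ref{2.10}) crudely gives $|\epsilon_0^{k+1}|\le\tfrac{1}{3}(4|\epsilon_1^{k+1}|+|\epsilon_2^{k+1}|)\le\tfrac{5}{3}\|\epsilon^{k+1}\|$, which inflates the off-diagonal mass beyond $g_0$ and destroys the absorption. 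The paper instead substitutes (\ref{2.10}) exactly, which redistributes the weights into $-\tfrac{4}{3}g_i$ on $\epsilon_1^{k+1}$ and $g_i/3-g_{i-2}$ on $\epsilon_2^{k+1}$ (and symmetrically at the right end), checks the signs of these modified weights using the monotonicity $|g_{j+1}|<|g_j|$, and verifies that their absolute values still sum to $\sum_{j\neq0}|g_j|<g_0$; the cases $i=2$ and $i=M-2$ need separate treatment because a modified weight then lands on the diagonal. Your argument needs this bookkeeping (for both the $g_j$ and the $\theta_j$ weights) to make the ``cancellation'' step legitimate; once it is supplied, the rest of your plan coincides with the paper's.
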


\begin{proof}
We prove this theorem by mathematical induction.
Using the first equation of (\ref{3.1}), we have
\begin{equation*}
\begin{aligned}
\left(\frac{1}{\mu}+\frac{D_1g_0}{h^\beta}\right)\epsilon_i^{k+1}=&\frac{1}{\mu}\sum_{n=1}^k(b_{k-n}-b_{k-n+1})\epsilon_i^n+\frac{1}{\mu}b_k\epsilon_i^0 \\\\
&-\frac{D_1}{h^\beta}\sum_{j=-M+i\atop j\neq0}^ig_j\epsilon_{i-j}^{k+1}+(f_1(N_i^k,P_i^k)-f_1(\bar{N}_i^k,\bar{P}_i^k)).
\end{aligned}
\end{equation*}
According to the numerical approximations (\ref{2.10}), when $i\neq 2,M-2$, the above equation can be rewritten as 
$$
\begin{aligned}
  \left(\frac{1}{\mu}+\frac{D_1g_0}{h^\beta}\right)\epsilon_i^{k+1}=&\frac{1}{\mu}\sum_{n=1}^k(b_{k-n}-b_{k-n+1})\epsilon_i^n+\frac{1}{\mu}b_k\epsilon_i^0
  -\frac{D_1}{h^\beta}\sum_{j=-M+i+1\atop j\neq0}^{i-1}g_j\epsilon_{i-j}^{k+1}\\
 &+\frac{D_1}{3h^\beta}(g_i\epsilon_2^{k+1}+g_{-M+i}\epsilon_{M-2}^{k+1}) -\frac{4D_1}{3h^\beta}(g_i\epsilon_1^{k+1}+g_{-M+i}\epsilon_{M-1}^{k+1})\\\\
 &+(f_1(N_i^k,P_i^k)-f_1(\bar{N}_i^k,\bar{P}_i^k))\\
 =&\frac{1}{\mu}\sum_{n=1}^k(b_{k-n}-b_{k-n+1})\epsilon_i^n+\frac{1}{\mu}b_k\epsilon_i^0
  -\frac{D_1}{h^\beta}\sum_{j=-M+i+1\atop j\neq0,i-2,-M+i+2}^{i-1}g_j\epsilon_{i-j}^{k+1}\\
 &+\frac{D_1}{ h^\beta}((g_i/3-g_{i-2})\epsilon_2^{k+1}+(g_{-M+i}/3-g_{-M+i+2})\epsilon_{M-2}^{k+1})\\\\
 & -\frac{4D_1}{3h^\beta}(g_i\epsilon_1^{k+1}+g_{-M+i}\epsilon_{M-1}^{k+1})+(f_1(N_i^k,P_i^k)-f_1(\bar{N}_i^k,\bar{P}_i^k)).
\end{aligned}
$$
Since $g_i/3-g_{i-2}>0$ and $g_{-M+i}/3-g_{-M+i+2}>0$,
we get
$$
\begin{aligned}
 &\left(\frac{1}{\mu}+\frac{D_1g_0}{h^\beta}\right)\|\epsilon_i^{k+1}\|  \\
 \leq&\frac{1}{\mu}\sum_{n=1}^k(b_{k-n}-b_{k-n+1})\|\epsilon_i^n\|+\frac{1}{\mu}b_k\|\epsilon_i^0 \|-\frac{D_1}{h^\beta}\sum_{j=-M+i+1\atop j\neq0,i-2,-M+i+2}^{i-1}g_j\|\epsilon_{i-j}^{k+1}\|\\
  &+\frac{D_1}{ h^\beta}((g_i/3-g_{i-2})\|\epsilon_2^{k+1}\|+(g_{-M+i}/3-g_{-M+i+2})\|\epsilon_{M-2}^{k+1}\|)\\\\
 &-\frac{4D_1}{3h^\beta}(g_i\|\epsilon_1^{k+1}\|+g_{-M+i}\|\epsilon_{M-1}^{k+1}\|)+L(\| N_i^k-\bar{N}_i^k \|+ \|P_i^k-\bar{P}_i^k \|).
\end{aligned}
$$
From the above inequality we know that
\begin{equation*}
\begin{aligned}
 \left(\frac{1}{\mu}+\frac{D_1g_0}{h^\beta}\right)\|\epsilon^{k+1}\|
 \leq&\frac{1}{\mu}\sum_{n=1}^k(b_{k-n}-b_{k-n+1})\|\epsilon ^n\|+\frac{1}{\mu}b_k\|\epsilon ^0\|+L (\|\epsilon^k\|+ \|\varepsilon ^k \|)\\
 &-\frac{D_1}{h^\beta}\sum_{j=-M+i \atop j\neq0}^{i }g_j\|\epsilon ^{k+1}\| .
\end{aligned}
\end{equation*}
Together with  the fourth property of coefficient $g_i$, we can obtain the following inequality
 $$
\begin{aligned}
\left(\frac{1}{\mu}+\frac{D_1g_0}{h^\beta}\right)\|\epsilon^{k+1}\|  \leq&\frac{1}{\mu}\sum_{n=1}^k(b_{k-n}-b_{k-n+1})\|\epsilon ^n\|+\frac{1}{\mu}b_k\|\epsilon ^0\|+\frac{D_1g_0}{h^\beta} \|\epsilon ^{k+1}\|\\\\
&+L (\|\epsilon^k\|+ \|\varepsilon ^k \|).
 \end{aligned}
 $$
Therefore,
\begin{equation}\label{est}
  \|\epsilon ^{k+1}\|\leq \sum_{n=1}^k(b_{k-n}-b_{k-n+1})\|\epsilon ^n\|+  b_k\|\epsilon ^0\| +L\mu(\|\epsilon^k\|+ \|\varepsilon ^k \|).
\end{equation}
When $i=2$, we have
$$
\begin{aligned}
  &\left(\frac{1}{\mu}+\frac{D_1g_0}{h^\beta}\right)\epsilon_2^{k+1}-\frac{D_1}{ h^\beta} g_2/3\epsilon_2^{k+1}\\\\
 =&\frac{1}{\mu}\sum_{n=1}^k(b_{k-n}-b_{k-n+1})\epsilon_2^n+\frac{1}{\mu}b_k\epsilon_2^0
  -\frac{D_1}{h^\beta}\sum_{j=-M+3\atop j\neq0 ,-M+4}^{1}g_j\epsilon_{2-j}^{k+1}\\\\
 &+\frac{D_1}{ h^\beta}( g_{-M+2}/3-g_{-M+4})\epsilon_{M-2}^{k+1}\\\\  &-\frac{4D_1}{3h^\beta}(g_2\epsilon_1^{k+1}+g_{-M+2}\epsilon_{M-1}^{k+1})+(f_1(N_2^k,P_2^k)-f_1(\bar{N}_2^k,\bar{P}_2^k)).
\end{aligned}
$$
Then in the similar way as above, we can also get the estimate (\ref{est}) for $i=2$. If $i=M-2$, there exists a similar argument.

By almost the same deduction as $ \|\epsilon^{k+1}\|$, we can get
\begin{equation}
  \|\varepsilon^{k+1}\|\leq \sum_{n=1}^k(b_{k-n}-b_{k-n+1})\|\varepsilon ^n\|+ b_k\|\varepsilon^0\| +L\mu(\|\epsilon^k\|+ \|\varepsilon ^k \|).
\end{equation}
For $k=1$, it's easy to check that (\ref{3.2}) holds.
When $k>1$, suppose (\ref{3.2}) holds for $n=1,\cdots,k$.
Then  there exists
$$
\|\epsilon^{k+1}\|\leq\displaystyle\left(\frac{1-b_k+
2L\mu}{(1-\alpha)-T^{\alpha}\Gamma(2-\alpha)L} +b_k \right) (\| \epsilon^0
\|+\| \varepsilon^0
\|).
$$
We just need to prove
$$1-b_k+
2L\mu+b_k((1-\alpha)-T^{\alpha}\Gamma(2-\alpha)L)\leq1,$$
i.e.,
 $$2L\mu\cdot b_k^{-1}\leq \alpha+T^{\alpha}\Gamma(2-\alpha)L.$$
Using $\mu=\Gamma(2-\alpha)\tau^{\alpha}=\Gamma(2-\alpha)\frac{T^{\alpha}}{N^\alpha}$,    $b_k^{-1}<\frac{(k+1)^\alpha}{1-\alpha}$
and $\Gamma(1-\alpha)T^{\alpha}L< 1$,
we can obtain
$$
2L\mu\cdot b_k^{-1}<2L\cdot\Gamma(2-\alpha)\frac{T^{\alpha}}{N^\alpha}\cdot\frac{(k+1)^\alpha}{1-\alpha}\leq2L\cdot\Gamma(1-\alpha)T^{\alpha},
$$
and
$$
\alpha+T^{\alpha}\Gamma(2-\alpha)L>2L\cdot\Gamma(1-\alpha)T^{\alpha}.
$$
This implies that
$$1-b_k+
2L\mu+b_k((1-\alpha)-T^{\alpha}\Gamma(2-\alpha)L)\leq1.$$
So
$$
 \|\epsilon^{k+1}\|\leq \displaystyle
\frac{1}{(1-\alpha)-T^{\alpha}\Gamma(2-\alpha)L} (\| \epsilon^0
\|+\| \varepsilon^0
\|),
$$
and
$$
 \|\varepsilon^{k+1}\|\leq \displaystyle
\frac{1}{(1-\alpha)-T^{\alpha}\Gamma(2-\alpha)L} (\| \epsilon^0
\|+\| \varepsilon^0
\|).
$$
\end{proof}

\begin{theorem}
Let $\{(N(x_i,t_k),P(x_i,t_k))\}_{i=1}^{M-1}$ and $\{(N_i^k,P_i^k)\}_{i=1}^{M-1}$ be the exact solutions of the
subdiffusive reaction-diffusion equation (\ref{2.1})-(\ref{2.3})  and of the
numerical schemes (\ref{2.8})-(\ref{2.10})  respectively and
 define
$\rho_i^k=N(x_i,t_k)-N_i^k$ and
$ \eta_i^k=P(x_i,t_k)-P_i^k$.
Then $\rho^k $ and $ \eta^k$ satisfy the following error estimates:
\begin{equation}\label{3.5}
\|\rho^{n}\|\leq C(\tau+h^2) \,and \,\|\eta^{n}\|\leq C(\tau+h^2),
\end{equation}
when $T < (1/(\Gamma(1-\alpha)L))^{1/\alpha} $.
\end{theorem}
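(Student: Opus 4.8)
My plan is to run the induction of Theorem~\ref{Th3.1} essentially unchanged, with the role of the initial perturbation $(\epsilon^0,\varepsilon^0)$ now played by the local consistency error that the exact solution leaves in the scheme. First I would quantify that consistency error. Substituting the exact solution $(N,P)$ into the discrete operators and invoking the temporal estimate (\ref{lemma2.3}) and the spatial estimate of Lemma~\ref{lemma2.2}, the exact solution satisfies
\[
D^{\alpha}_{\tau}N(x_i,t_{k+1})=D_1\delta_x^\beta N(x_i,t_{k+1})+f_1\big(N(x_i,t_{k+1}),P(x_i,t_{k+1})\big)+\mathcal{O}(\tau^{2-\alpha}+h^2).
\]
The one genuinely new estimate is the time lag in the reaction term: the scheme evaluates $f_1$ at level $t_k$ while the fractional and diffusion operators sit at level $t_{k+1}$, so the local Lipschitz hypothesis on $f_1$ together with the smoothness of $(N,P)$ gives $|f_1(N(t_{k+1}),P(t_{k+1}))-f_1(N(t_k),P(t_k))|=\mathcal{O}(\tau)$. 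Absorbing $\mathcal{O}(\tau^{2-\alpha})$ into $\mathcal{O}(\tau)$ (here $2-\alpha>1$), the exact solution obeys the scheme up to a consistency term $R_i^{k+1}$ with $\|R^{k+1}\|\le C_0(\tau+h^2)$.

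Next I would set up the error equation. Writing this consistency identity in the rearranged form of (\ref{3.1}) and subtracting the numerical scheme (\ref{3.1}), and using $\rho_i^0=\eta_i^0=0$ (the initial data are sampled exactly), I obtain for $\rho$ the same identity as in the proof of Theorem~\ref{Th3.1} with two modifications: the term $\tfrac{1}{\mu}b_k\rho_i^0$ vanishes and the source $R_i^{k+1}$ is added. Repeating verbatim the steps of that proof — elimination of the boundary unknowns through (\ref{2.10}), the sign properties~(1)--(4) of the weights $g_j$, passage to the $L_\infty$ norm, and the Lipschitz bound for $f_1,f_2$ — yields
\[
\|\rho^{k+1}\|\le\sum_{n=1}^{k}(b_{k-n}-b_{k-n+1})\|\rho^n\|+L\mu\big(\|\rho^k\|+\|\eta^k\|\big)+\mu\|R^{k+1}\|,
\]
together with the symmetric inequality for $\|\eta^{k+1}\|$.

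Finally I would close the estimate by induction, exactly mirroring Theorem~\ref{Th3.1}. Taking as induction hypothesis that $\|\rho^n\|$ and $\|\eta^n\|$ are each bounded by $C(\tau+h^2)$ up to level $k$, and using the telescoping identity $\sum_{n=1}^{k}(b_{k-n}-b_{k-n+1})=1-b_k$, the induction step reduces to verifying the same algebraic inequality $2L\mu\,b_k^{-1}\le\alpha+T^{\alpha}\Gamma(2-\alpha)L$ already established there, now balancing the source $\mu\|R^{k+1}\|$ instead of the initial perturbation; the hypothesis $T<(1/(\Gamma(1-\alpha)L))^{1/\alpha}$ is exactly what keeps the denominator $(1-\alpha)-T^{\alpha}\Gamma(2-\alpha)L$ positive and the constant $C$ finite.

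I expect the main obstacle to be conceptual rather than computational, namely explaining why the accumulated consistency error stays $\mathcal{O}(\tau+h^2)$. Each single-step source is only $\mu\|R^{k+1}\|=\mathcal{O}(\tau^{\alpha}(\tau+h^2))$, far smaller than the target, yet it is injected at every one of the $N\sim T/\tau$ levels. The reason the global error does not degrade is the interplay between $\mu=\Gamma(2-\alpha)\tau^{\alpha}$ and the lower bound $b_k>(1-\alpha)(k+1)^{-\alpha}$ from (\ref{2.6}): at the final level $k\sim N$ one has $\mu\,b_k^{-1}\le\Gamma(1-\alpha)T^{\alpha}=\mathcal{O}(1)$ uniformly, so the factor $b_k^{-1}$ produced by the fractional memory sum precisely compensates the $\tau^{\alpha}$ smallness of each source. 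Making this bound uniform in $k$, and hence obtaining a single constant $C$ independent of $\tau$ and $h$, is the crux of the argument.
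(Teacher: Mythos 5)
Your plan reproduces the paper's proof essentially step for step: the same consistency residual $r^{k+1}=\mathcal{O}(\tau+h^{2})$, the same error recursion $\|\rho^{k+1}\|\le\sum_{n=1}^{k}(b_{k-n}-b_{k-n+1})\|\rho^{n}\|+L\mu(\|\rho^{k}\|+\|\eta^{k}\|)+\mu\|r^{k+1}\|$ obtained from (\ref{3.1}), (\ref{2.10}) and the weight properties, and the same compensation $\mu b_{k}^{-1}\le\Gamma(1-\alpha)T^{\alpha}$ that you rightly identify as the crux. The only bookkeeping difference is that the paper's induction hypothesis carries the growing weight explicitly, namely $\|\rho^{m}\|\le b_{m-1}^{-1}\bigl((1-\alpha)-T^{\alpha}\Gamma(2-\alpha)L\bigr)^{-1}\max_{n}\mu\|r^{n}\|$, converting $b_{m-1}^{-1}\mu$ into $\Gamma(1-\alpha)T^{\alpha}$ only at the final step, whereas your flat hypothesis $\|\rho^{n}\|\le C(\tau+h^{2})$ must absorb that factor into $C$ from the outset, so the closing inequality is not literally the one from Theorem~\ref{Th3.1} (your source $\mu\|r^{k+1}\|$ lacks the $b_{k}$ prefactor that $b_{k}\|\epsilon^{0}\|$ carried there), though it closes in the same way.
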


\begin{proof}
According to the first equation of (\ref{3.1}),  when $i\neq2,M-2$, we can get the following inequality
$$
\begin{aligned}
&\left(\frac{1}{\mu}+\frac{D_1g_0}{h^\beta}\right)|\rho_i^{k+1}|  \\\\
\leq&\frac{1}{\mu}\sum_{n=1}^k(b_{k-n}-b_{k-n+1})|\rho_i^n|+\frac{1}{\mu}b_k|\rho_i^0 |-\frac{D_1}{h^\beta}\sum_{j=-M+i+1\atop j\neq0,i-2,-M+i+2}^{i-1}g_j|\rho_{i-j}^{k+1}|\\\\
&+\frac{D_1}{ h^\beta}((g_i/3-g_{i-2})|\rho_2^{k+1}|+(g_{-M+i}/3-g_{-M+i+2})|\rho_{M-2}^{k+1}|)\\\\
&-\frac{4D_1}{3h^\beta}(g_i|\rho_1^{k+1}|+g_{-M+i}|\rho_{M-1}^{k+1}|)+L(| \rho_i^k |+ |\eta_i^k|)+r_{1i}^{k+1},
\end{aligned}
$$
where $r_1^{k+1}=O(\tau+h^2)$.
From the above inequality  we know that
\begin{equation*}
\begin{aligned}
 &\left(\frac{1}{\mu}+\frac{D_1g_0}{h^\beta}\right)\|\rho^{k+1}\|  \\\\
 \leq&\frac{1}{\mu}\sum_{n=1}^k(b_{k-n}-b_{k-n+1})\|\rho^n\|+\frac{1}{\mu}b_k\|\rho ^0\|-\frac{D_1}{h^\beta}\sum_{j=-M+i \atop j\neq0}^{i }g_j\|\rho^{k+1}\|\\\\
&  +L (\|\rho^k\|+ \|\eta ^k \|)+\|r_1^{k+1}\|.
\end{aligned}
\end{equation*}
In the view of  the fourth property of $g_i$, we obtain
$$
 \|\rho ^{k+1}\|\leq \sum_{n=1}^k(b_{k-n}-b_{k-n+1})\|\rho ^n\|+  b_k\|\epsilon ^0\| +L\mu(\|\rho^k\|+ \|\eta ^k \|)+\mu\|r_1^{k+1}\|,
$$
and
$$
 \|\eta^{k+1}\|\leq \sum_{n=1}^k(b_{k-n}-b_{k-n+1})\|\eta^n\|+  b_k\|\eta^0\| +L\mu(\|\rho^k\|+ \|\eta ^k \|)+\mu\|r_2^{k+1}\|,
$$
where $r_1^{k+1}=O(\tau+h^2)$ and $r_2^{k+1}=O(\tau+h^2)$. Similar to the discussions of the numerical stability for the special cases $i=2,M-2$, we know that the above inequalities hold for $i=1,2,\cdots,M-1$. Now suppose that
\begin{equation}\label{s2}
 \|\rho^{m}\| \le
 \left(\frac{b_{m-1}^{-1}}{(1-\alpha)-T^{\alpha}\Gamma(2-\alpha)L}\right)(\|\rho^0 \|+\|\eta^0 \|+\max_{1\leq n\leq m\atop i=1,2}\mu|| r_i^{n}||)
 \end{equation}
 and
\begin{equation}\label{s3}
\|\rho^{m}\| \le
 \left(\frac{b_{m-1}^{-1}}{(1-\alpha)-T^{\alpha}\Gamma(2-\alpha)L}\right)(\|\rho^0 \|+\|\eta^0 \|+\max_{1\leq n\leq m\atop i=1,2}\mu|| r_i^{n}||)
 \end{equation}
hold for $m=1,2,\cdots,k$. Next we prove that the estimates (\ref{s2}) and (\ref{s3}) hold for $m=k+1$.

When $m=1$, it's easy to check that (\ref{s2}) holds.
When $m=k+1$, together with $b_{k-n-1}^{-1}<b_n^{-1}$, we have
$$
\begin{aligned}
 ||\rho^{k+1}||\leq&
\sum_{n=0}^{k}\left(\frac{(b_n-b_{n+1})b_{k-n-1}^{-1}}{(1-\alpha)-T^{\alpha}\Gamma(2-\alpha)L}\right)(\|\rho^0 \|+\|\eta^0 \|+\max_{1\leq n\leq k\atop i=1,2}\mu|| r_i^{n}||)+b_n  ||\rho^0||
\\
\\
&+\frac{2 L\mu b_{n-1}^{-1}}{(1-\alpha)-T^{\alpha}\Gamma(2-\alpha)L}(\|\rho^0 \|+\|\eta^0 \|+\max_{1\leq n\leq k\atop i=1,2}\mu|| r_i^{n}||)+\mu|| r^{k+1}||
\\
\\
 \leq&
\left(\frac{b_{n}^{-1}}{(1-\alpha)-T^{\alpha}\Gamma(2-\alpha)L}\right)(\|\rho^0 \|+\|\eta^0 \|+\max_{1\leq n\leq k\atop i=1,2}\mu|| r_i^{n}||)+\mu|| r^{k+1}||.
\end{aligned}
$$
Notice that
$$
\rho^0_i=0, \,\eta^0_i=0,~~~~0\leq i\leq M.
$$
Together with
$b_n^{-1}\leq\frac{(n+1)^{\alpha}}{1-\alpha}$ and $(n+1)\tau\leq T$, we
obtain
$$
||\rho^{k+1}||\leq C(\tau+h^2).
$$
In a similar way, we can get
$$
||\eta^{k+1}||\leq\displaystyle C(\tau+h^2).
$$

\end{proof}


\section{Positivity and boundedness of the analytical and numerical solutions of the space-time fractional predator-prey reaction-diffusion model}


In this section, we prove that the analytical solutions of (\ref{2.1}) have positivity and boundedness. And then we demonstrate that the numerical solutions of the given schemes (\ref{2.8})-(\ref{2.10}) can preserve the properties: positivity and boundedness.

\subsection{Positivity and boundedness of the analytical solutions}
Firstly we introduce the maximum principle which is necessary for getting the positivity and boundedness of the analytical solutions. Here we consider the following one dimensional space-time fractional equation
 \begin{equation}\label{4.1}
  Lu=\frac{\partial^\alpha u}{\partial t^\alpha}-D_1\frac{\partial^\beta u}{\partial |x|^\beta}+c(x,t)u=f(x,t), ~~~~ (x,t) \in \Omega_T=[l,r] \times (0,T],
 \end{equation}
 where $\Omega_T$ is a bounded domain with Lipschitz continuous boundary.

\begin{theorem} \label{Th4.1}
Let the coefficient $c(x,t)\ge 0$ and the non-homogeneous term $f(x,t)\leq 0$ (resp. $f(x,t)\geq 0$) in $\Omega_T$. If $u\in C^{2,1}(\Omega_T)\bigcap C(\overline{\Omega}_T)$ is the solution of (\ref{4.1}), then the non-negative maximum (resp. non-positive minimum) of $u(x,t)$ in $\Omega_T$ (if exists) must reach at the parabolic boundary $\Gamma_T$, i.e.,
\begin{equation} \label{4.2}
\max_{\overline{\Omega}_T}u(x,t)\leq \max_{\Gamma{_T}} \lbrace u(x,t),0\rbrace\,\,\,\,\,\, ({\rm resp.}\,\,\min_{\overline{\Omega}_T}u(x,t)\geq \min_{\Gamma{_T}} \lbrace u(x,t),0\rbrace).
\end{equation}
\end{theorem}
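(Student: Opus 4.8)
The plan is to argue by contradiction, reducing the claim to the case of a strictly positive interior maximum and then exploiting the sign of each term of $L$ at that point. First I would dispose of the trivial case: since $\max_{\Gamma_T}\{u(x,t),0\}\ge 0$ always, if $\max_{\overline{\Omega}_T}u\le 0$ then (\ref{4.2}) holds automatically. So assume $M:=\max_{\overline{\Omega}_T}u>0$ and, toward a contradiction, that $M>\max_{\Gamma_T}\{u,0\}\ge\max_{\Gamma_T}u$. Then the maximum is attained at some point $(x_0,t_0)$ lying off the parabolic boundary, in particular with $t_0>0$, and $M>\max_{\Gamma_T}u\ge u(x_0,0)$.

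The heart of the argument is an extremum principle for the Caputo derivative. Because $u(x_0,t_0)=M\ge u(x_0,t)$ for every $t\in[0,t_0]$, the function $t\mapsto u(x_0,t)$ attains its maximum on $[0,t_0]$ at the right endpoint, and the standard Caputo extremum estimate (in the spirit of Luchko and Al-Refai) then gives
\begin{equation*}
\frac{\partial^\alpha u}{\partial t^\alpha}\Big|_{(x_0,t_0)}\ge \frac{t_0^{-\alpha}}{\Gamma(1-\alpha)}\big(u(x_0,t_0)-u(x_0,0)\big)=\frac{t_0^{-\alpha}}{\Gamma(1-\alpha)}\big(M-u(x_0,0)\big)>0,
\end{equation*}
where the strictness comes precisely from $M>u(x_0,0)$. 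Securing this \emph{strict} lower bound is what lets the proof avoid the usual degeneracy of weak maximum principles, in which all terms merely vanish simultaneously.

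Next I would control the spatial term. Since $x_0$ is a spatial maximum of $u(\cdot,t_0)$, I would invoke the corresponding maximum principle for the Riesz derivative; reading $\frac{\partial^\beta}{\partial|x|^\beta}$ as $-(-\Delta)^{\beta/2}$, or directly from its nonlocal representation $\frac{\partial^\beta u}{\partial|x|^\beta}\sim-\int \frac{u(x_0)-u(\xi)}{|x_0-\xi|^{1+\beta}}\,d\xi$ whose integrand is $\ge 0$ at a maximum, one concludes $\frac{\partial^\beta u}{\partial|x|^\beta}\big|_{(x_0,t_0)}\le 0$, hence $-D_1\frac{\partial^\beta u}{\partial|x|^\beta}\ge 0$ because $D_1>0$. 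Together with $c(x_0,t_0)\,u(x_0,t_0)=c(x_0,t_0)\,M\ge 0$ (using $c\ge 0$ and $M>0$), adding the three contributions yields $Lu(x_0,t_0)>0$, which contradicts $Lu=f\le 0$. This establishes the maximum statement. The non-positive minimum case then follows by symmetry: applying the just-proved result to $v=-u$, one checks that $Lv=-Lu=-f\le 0$ while the coefficient $c\ge 0$ is unchanged, so $\max_{\overline{\Omega}_T}v\le\max_{\Gamma_T}\{v,0\}$, which rewrites as $\min_{\overline{\Omega}_T}u\ge\min_{\Gamma_T}\{u,0\}$.

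I expect the main obstacle to be rigorously justifying the two pointwise extremum inequalities for the nonlocal operators. The Caputo estimate with its explicit strictly positive lower bound requires the appropriate regularity ($u\in C^{2,1}(\Omega_T)\cap C(\overline{\Omega}_T)$) and a careful integration-by-parts/limiting argument near the singularity of the kernel. The sign of the Riesz derivative at a spatial maximum is delicate when $x_0$ sits at the interval endpoints, since the defining integral is taken over the bounded domain $[l,r]$ rather than all of $\mathbb{R}$, so the nonlocal contribution must be handled on its own terms rather than by naively importing the whole-line fractional-Laplacian identity.
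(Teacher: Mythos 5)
Your proof is correct, and it reaches the paper's contradiction by a more streamlined route. Both arguments locate a positive interior maximum at $(x_0,t_0)$ and then combine the same three sign facts: a lower bound on the Caputo derivative of $t\mapsto u(x_0,t)$ at a right-endpoint maximum, nonpositivity of the Riesz derivative at a global spatial maximum, and $c(x_0,t_0)u(x_0,t_0)\ge 0$. The difference is where the strict inequality that closes the contradiction comes from. You take it directly from the Al--Refai-type estimate $\frac{\partial^\alpha u}{\partial t^\alpha}(x_0,t_0)\ge \frac{t_0^{-\alpha}}{\Gamma(1-\alpha)}\bigl(M-u(x_0,0)\bigr)>0$, which is available precisely because $(x_0,0)\in\Gamma_T$ forces $u(x_0,0)<M$; this yields $Lu(x_0,t_0)>0\ge f$ with no further apparatus. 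The paper instead introduces the barrier $v=u-\varepsilon E_{\alpha,1}(bt^{\alpha})$, arranges $Lv<0$ strictly in the interior, and derives $Lv\ge 0$ at the maximum point --- but to absorb the $-\varepsilon b E_{\alpha,1}$ contribution in the Caputo derivative of $v$ it must invoke the very same strict lower bound on $\frac{\partial^\alpha u}{\partial t^\alpha}$ (this is the role of the smallness condition on $\varepsilon$), so your version removes a layer of machinery without losing generality. Two points to make explicit in a write-up: the nonpositivity of the Riesz term should be argued from the paper's bounded-domain representation, i.e.\ the truncated fractional centered difference whose weights satisfy $g_0>0\ge g_j$ for $j\ne 0$ and $\sum_{j\ne 0}|g_j|<g_0$, rather than from the whole-line fractional Laplacian --- you flag this, and since $x_0\in(l,r)$ (the lateral boundary belongs to $\Gamma_T$) the endpoint difficulty does not actually arise; and the reduction of the minimum statement to the maximum statement via $-u$, which the paper dismisses as ``similar analysis,'' is exactly as routine as you indicate.
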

In fact, if the non-negative maximum value of $u(x,t)$ is not at the boundary $\Gamma{_T}$, then there exists an interior point $(x^*,t^*)\in \Omega_T$ such that
\begin{equation}\label{me}
u(x^*,t^*)> \max_{\Gamma{ _T}}\lbrace u(x,t),0\rbrace~~{\rm and }~~ u(x^*,t^*) \ge \max_{\overline{\Omega}_T} u(x,t).
\end{equation}
We introduce  the auxiliary function $v(x,t)$ in the following form
\[v(x,t)=u(x,t)-\varepsilon  E_{\alpha,1}(bt^\alpha)\]
with $\varepsilon >0$ and $b>0$.
Choosing   sufficient small $\varepsilon $,  $v$ can be positive at the point $(x^*,t^*)$.
We know that, for any $(x,t) \in \Omega_T$, $v(x,t)$ satisfies
$$
\begin{aligned}
&\frac{\partial^{\alpha}v}{\partial t^{\alpha}}-D_1\frac{\partial^\beta v}{\partial|x|^\beta}+c(x,t)v\\\\
  =& f(x,t)-\varepsilon \big(b+D_1 C(x) +c(x,t)\big) E_{\alpha,1}(bt^\alpha),
\end{aligned}
$$
where $C(x)=\frac{( (r - x)^{-\beta} + (x-l)^{-\beta}) \sec(\pi \beta /2)(1 - \beta) }{ 2 \Gamma(
  2 - \beta)}$. Note that when $1<\beta<2$, $\sec(\pi \beta /2)( 1 - \beta) >0$. And because of $f(x,t)\leq0$, we deduce that
\begin{equation}\label{t12}
\frac{\partial^{\alpha}v}{\partial t^{\alpha}}-D_1\frac{\partial^\beta v}{\partial|x|^\beta}+c(x,t)v<0,
\end{equation}
at all interior points. While at the point $(x^*,t^*)$, we already have the result \cite{yu2013}
 \begin{equation}\label{t13}
\displaystyle\frac{\partial^{\alpha}v(x^*,t)}{\partial t^{\alpha}}|_{t=t^*}=\frac{\partial^{\alpha}u(x^*,t^*)}{\partial t^{\alpha}}
-\varepsilon b  E_{\alpha,1}(b(t^*)^\alpha)\geq0,
\end{equation}
when $\varepsilon\leq \frac{(1-\alpha)T^{-\alpha}m^*}{\Gamma(2-\alpha)bE_{\alpha,1}(b(t^*)^\alpha)}$.
For the Riesz fractional derivative, we know that $u$ satisfies
$$
\begin{aligned}
\frac{\partial^\beta u}{\partial|x|^\beta}|_{x=x^*} &=\lim_{h\rightarrow0}\frac{1}{h^{\beta}}\big(-\sum_{j=[(l-r+x^*)/h]\atop j\neq0}^{[x^*/h]} g_ju(x^*-jh,t^*)-g_0u(x^*,t^*)\big)\\
&\leq\lim_{h\rightarrow0}\frac{1}{h^{\beta}}\big(\sum_{j=[(l-r+x^*)/h]\atop j\neq0}^{[x^*/h]} g_j(u(x^*,t^*)-u(x^*-jh,t^*)\big)<0.
\end{aligned}
$$
Then for sufficient small $\varepsilon$,
\begin{equation}\label{t14}
\frac{\partial^\beta v}{\partial|x|^\beta}|_{x=x^*}= \frac{\partial^\beta u(x,t^*)}{\partial|x|^\beta}|_{x=x^*} +\varepsilon  D_1 E_{\alpha,1}(b(t^*)^\alpha) C(x)|_{x=x^*}\leq 0.
\end{equation}
Together with (\ref{t13}) and (\ref{t14}), we obtain
\[ \frac{\partial^{\alpha}v}{\partial t^{\alpha}}-D_1\frac{\partial^\beta v}{\partial|x|^\beta}+c(x,t)v\geq 0 \,\,\,\,\,\, at ~~~ (x^*,t^*),\]
which is contradictory with (\ref{t12}).

Similar analysis can be done for the case $f(x,t) \geq 0$. So from the above analysis  we arrive at Theorem \ref{Th4.1}.

Next we introduce the definitions of the upper and lower solutions, from which we can get positivity and boundedness of the analytical solutions.

\begin{definition}
Assume $u_i \,(i=1,2)$ solve the following equations
\bigskip
\begin{align}
\frac{\partial^{\alpha}u_i}{\partial t^{\alpha}}-D_i\frac{\partial^{\beta}u_i}{\partial |x|^{\beta}}&= f_i(u_1,u_2),\ \,x\in\Omega,\,t\in(0,T],\nonumber\\
Bu_i= \frac{\partial u_i}{\partial x} &= g_i(x,t),\quad\ \,x\in\partial\Omega,\,t\in(0,T],\label{4.3} \\
u_i(x,0)& =\varphi_i(x),\qquad x\in\Omega,\nonumber
\end{align}
and the nonlinear functions $f_1(\cdot,\cdot)$ is quasi-monotone decreasing and $f_2(\cdot,\cdot)$ is quasi-monotone increasing.
If there exist two functions $\tilde{u}_i(x,t)$ and $\utilde{u}_i(x,t)$ which satisfy
\begin{align}
&B \tilde{u}_i  -g_i(x,t)\geq 0\geq   B\utilde{u}_i -g_i(x,t),\ \,x\in\partial\Omega,\,t\in(0,T],\label{4.4.1}\\
&\tilde{u}_i(x,0)-\varphi_i(x)\geq 0 \geq\utilde{u}_i(x,0)-\varphi_i(x),\quad \ x\in\bar{\Omega}, \label{4.4.2}
\end{align}
 and
\bigskip
\begin{align}
\frac{\partial^{\alpha}\tilde{u}_1}{\partial t^{\alpha}}&-D_1\frac{\partial^{\beta}\tilde{u}_1}{\partial |x|^{\beta}}-f_1(\tilde{u}_1,\utilde{u}_2)\geq 0\geq \frac{\partial^{\alpha}\utilde{u}_1}{\partial t^{\alpha}}-D_1\frac{\partial^{\beta}\utilde{u}_1}{\partial |x|^{\beta}}-f_1(\utilde{u}_1,\tilde{u}_2),\label{4.4.3}\\
\frac{\partial^{\alpha}\tilde{u}_2}{\partial t^{\alpha}}&- D_2\frac{\partial^{\beta}\tilde{u}_2}{\partial |x|^{\beta}}-f_2(\tilde{u}_1,\tilde{u}_2)\geq 0\geq \frac{\partial^{\alpha}\utilde{u}_2}{\partial t^{\alpha}}- D_2\frac{\partial^{\beta}\utilde{u}_2}{\partial |x|^{\beta}}-f_2(\utilde{u}_1,\utilde{u}_2).\label{4.4.4}
\end{align}
Then  we call $U(x,t)=(\tilde{u}_1(x,t),\tilde{u}_2(x,t))$ and $V(x,t)=(\utilde{u}_1(x,t),\utilde{u}_2(x,t))$   upper  and lower solutions of the system (\ref{4.3}).
\end{definition}
\bigskip

\begin{theorem}\label{Th4.2}
Suppose $\{f_1,f_2\}$ is Lipschitz continuous and mixed quasi-monotonous. If the upper and lower solutions,
$U(x,t)$ and $V(x,t)$, satisfy the inequality $V(x,t)\leq U(x,t)$, then (\ref{4.3}) has a unique solution in $[V(x,t), U(x,t)]$.
\end{theorem}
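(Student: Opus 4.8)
The plan is to establish existence by the method of monotone iterations and then derive uniqueness within the order interval from the Lipschitz bound together with the maximum principle of Theorem \ref{Th4.1}. Since $\{f_1,f_2\}$ is Lipschitz continuous on the order-bounded set $[V,U]$, I would first fix a constant $K>0$ large enough that $f_i(u_1,u_2)+Ku_i$ is nondecreasing in its own $i$-th argument throughout $[V,U]$. Rewriting each equation of (\ref{4.3}) as $\frac{\partial^\alpha u_i}{\partial t^\alpha}-D_i\frac{\partial^\beta u_i}{\partial|x|^\beta}+Ku_i = f_i(u_1,u_2)+Ku_i$ puts the left-hand side in the form of the operator $L$ in (\ref{4.1}) with nonnegative zeroth-order coefficient $c\equiv K$, so that Theorem \ref{Th4.1} applies verbatim to every linearized problem.

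Next I would construct two sequences respecting the mixed quasi-monotone structure encoded in (\ref{4.4.3})--(\ref{4.4.4}). Starting from $\overline{u}^{(0)}=U$ and $\underline{u}^{(0)}=V$, define $\overline{u}^{(m)}$ and $\underline{u}^{(m)}$ as solutions of the linear fractional problems obtained by freezing the reaction terms at the previous iterate: for the prey equation (with $f_1$ quasi-monotone decreasing) the upper iterate is driven by $f_1(\overline{u}_1^{(m-1)},\underline{u}_2^{(m-1)})+K\overline{u}_1^{(m-1)}$ and the lower iterate by $f_1(\underline{u}_1^{(m-1)},\overline{u}_2^{(m-1)})+K\underline{u}_1^{(m-1)}$, while for the predator equation ($f_2$ quasi-monotone increasing) both frozen arguments come from the same level. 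Each such problem is linear with coefficient $c=K\ge 0$, hence uniquely solvable; applying Theorem \ref{Th4.1} in both its maximum and minimum forms to the differences of consecutive iterates (whose defining equations carry sign-definite right-hand sides and sign-definite boundary and initial data by (\ref{4.4.1})--(\ref{4.4.4})) yields the nested componentwise ordering $V=\underline{u}^{(0)}\le\underline{u}^{(1)}\le\cdots\le\overline{u}^{(m)}\le\cdots\le\overline{u}^{(1)}\le\overline{u}^{(0)}=U$.

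Being monotone and order-bounded, both sequences converge pointwise to limits $\overline{u}$ and $\underline{u}$ with $V\le\underline{u}\le\overline{u}\le U$. I would then pass to the limit in the linearized problems, using the continuity of $f_i$ and the uniform boundedness of the iterates to justify convergence under the nonlocal Caputo and Riesz operators, concluding that both $\overline{u}$ and $\underline{u}$ solve (\ref{4.3}). Finally, subtracting the two limiting systems and estimating the nonnegative difference $w=\overline{u}-\underline{u}$ with the Lipschitz bound on $f_i$ and the maximum principle (equivalently, a fractional Gronwall argument) forces $w\equiv 0$, so $\overline{u}=\underline{u}$ is a solution in $[V,U]$; any further solution lying in $[V,U]$ serves as its own upper and lower solution and is therefore squeezed between the two sequences, which gives uniqueness.

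The main obstacle I anticipate is this uniqueness step, namely showing that the upper and lower limit functions coincide: because both the time-Caputo and the space-Riesz derivatives are nonlocal, the pointwise comparison used for classical parabolic systems must be replaced by a maximum-principle or Gronwall-type estimate tailored to the fractional operators, and one must check that the shift constant $K$ read off from the Lipschitz condition is compatible with the nonnegativity requirement $c\ge 0$ in Theorem \ref{Th4.1}. A secondary technical point is rigorously justifying the interchange of the limit with the fractional derivatives when passing from the linearized iterates to the nonlinear system.
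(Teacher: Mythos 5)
Your proposal follows essentially the same route as the paper: a shifted monotone iteration (the paper uses the Lipschitz constant $L$ itself as your constant $K$) arranged according to the mixed quasi-monotone structure, the maximum principle of Theorem \ref{Th4.1} to obtain the nested ordering, passage to the limit, and finally a fractional maximum-principle argument (carried out in the paper via an auxiliary function and a smallness restriction on $T$ that is then iterated) to show the upper and lower limits coincide. The obstacle you flag at the end is exactly the step the paper spends most of its effort on, so your plan is on target.
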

\begin{proof}
Let us define the following iteration
  \begin{equation}\label{A1}
    \left\{
     \begin{aligned}
     &\frac{\partial^{\alpha}\bar{u}_1^{(k)}}{\partial t^{\alpha}}-D_1\frac{\partial^\beta \bar{u}_1^{(k)}}{\partial |x|^\beta}+L\cdot \bar{u}^{(k)}_1=L\cdot \bar{u}^{(k-1)}_1+f_1(\bar{u}_1^{(k-1)},\underbar{u}_2^{(k-1)}),  \\
   &\frac{\partial^{\alpha}\bar{u}_2^{(k)}}{\partial t^{\alpha}}-D_2\frac{\partial^\beta \bar{u}_2^{(k)}}{\partial |x|^\beta}+L\cdot
  \bar{u}^{(k)}_2=L\cdot \bar{u}^{(k-1)}_2+f_2(\bar{u}_1^{(k-1)},\bar{u}_2^{(k-1)}),  \\
   &\frac{\partial^{\alpha}\underbar{u}_1^{(k)}}{\partial t^{\alpha}}-D_1\frac{\partial^\beta \underbar{u}_1^{(k)}}{\partial |x|^\beta}+L\cdot
   \underbar{u}^{(k)}_1=L\cdot \underbar{u}^{(k-1)}_1+f_1(\underbar{u}_1^{(k-1)},\bar{u}_2^{(k-1)}),  \\
   &\frac{\partial^{\alpha}\underbar{u}_2^{(k)}}{\partial t^{\alpha}}-D_2\frac{\partial^\beta \underbar{u}_2^{(k)}}{\partial |x|^\beta}+L\cdot
   \underbar{u}^{(k)}_2=L\cdot \underbar{u}^{(k-1)}_2+f_2(\underbar{u}_1^{(k-1)},\underbar{u}_2^{(k-1)}),  \\
   & B\bar{u}_i^{(k)} |_{\partial\Omega\times(0,T]}=B \underbar{u}_i^{(k)} |_{\partial\Omega\times(0,T]}
   =g_i(x,t)|_{\partial\Omega\times(0,T]},\,\,(i=1,2) \\
   &\bar{u}_i^{(k)}(x,0)=\underbar{u}_i^{(k)}(x,0)=\varphi_i(x),\,\,(x\in\bar{\Omega},\,i=1,2),
     \end{aligned}
     \right.
    \end{equation}
    where $L$ is the maximum of Lipschiz constants of $f_1$ and $f_2$,
    and denote  the initial iteration function as
 \begin{align}
 (\bar{u}^{(0)}_1,\bar{u}^{(0)}_2)=(\tilde{u}_1,\tilde{u}_2),\nonumber\\
 (\underbar {u}^{(0)}_1,\underbar{u}^{(0)}_2)=(\utilde{u}_1,\utilde{u}_2),\nonumber
 \end{align}
 where $\utilde{u}_1 \leq  \tilde{u}_1$ and $\utilde{u}_2 \leq \tilde{u}_2$.
According to the maximum principle Theorem \ref{Th4.1} and using the iteration (\ref{A1}), similar to the analysis \cite{yu2013},  we can get
$$
 \utilde{u}_i\leq \underbar{u}^{(1)}_i\leq\cdots\leq\underbar{u}^{(k)}_i\leq\bar{u}^{(k)}_i\leq\cdots
  \leq\bar{u}^{(1)}_i\leq\tilde{u}_i, \,\,   (i=1,2).
$$
Note that $f_1$ is quasi-monotone decreasing and $f_2$ is quasi-monotone increasing, then
 \begin{align}
 &\lim_{k\rightarrow+\infty}\bar{u}^{(k)}_i=\bar{u}_i(x,t),\nonumber\\
 &\lim_{k\rightarrow+\infty}\underbar{u}^{(k)}_i=\underbar{u}_i(x,t),\,\,(i=1,2),\nonumber
 \end{align}
 which satisfy $\bar{u}_i\geq \underbar{u}_i$.
Next we will check that
 \[\bar{u}_i=\underbar{u}_i=u_i,(i=1,2).\]
Denote $w_1=\bar{u}_1-\underbar{u}_1(\ge0)$,  $w_2=\bar{u}_2-\underbar{u}_2(\ge0)$. We know $w_i\ge0,\,i=1,2$.
Then we get the following inequalities
   \begin{equation}
   \left\{
    \begin{aligned}
      &\frac{\partial^{\alpha}w_1}{\partial t^{\alpha}}  - D_1\frac{\partial^\beta w_1}{\partial |x|^\beta} -L\cdot(w_1+w_2)\leq0,\\
      &\frac{\partial^{\alpha}w_2}{\partial t^{\alpha}}  - D_2\frac{\partial^\beta w_2}{\partial |x|^\beta} -L\cdot( w_1+w_2)\leq0,\\
      &B w_1 =0,\quad  B w_2 =0,     \\
      & w_1(x,0)=0,\quad w_2(x,0)=0.
       \end{aligned}
      \right.
     \end{equation}
The below arguments show that $w_1=w_2=0$ in $\overline{\Omega}_T$. Suppose that there exist $(\hat{x}_i,\hat{t}_i) \in \Gamma_T$ ($i=1,2$) and   $w_1$ and $w_2$ can obtain their maximum values at $(\hat{x}_i,\hat{t}_i)$, ($i=1,2$), respectively.
We may assume that $w_1 (\hat{x}_1, \hat{t}_1)\le w_2(\hat{x}_2, \hat{t}_2)$. Since  $ w_i (x, 0)=0$ and $w_i (x,t)\geq0$, we can get  $t^*_i\, (<\hat{t}_i) $ such that
 $w_i(x,t)<\frac{1}{2}w_i(\hat{x}_i,\hat{t}_i)$ for any $(x,t)\in\overline{\Omega}\times(0,t^{*}_i)$.
Suppose the function $u_i$  solve the equations
  $$
  \left\{
    \begin{aligned}
    &\frac{ \partial u_i(t)}{\partial t}=-\frac{1}{2}\frac{w_i(\hat{x}_i,\hat{t}_i)}{t^{*}_i}\quad {\rm in}\,\,(0,t^{*}_i),
     \\
      & u_i(0)=\frac{1}{2}w_i(\hat{x}_i,\hat{t}_i),
     \\
      &
      u_i(t) =0, \, t \in [t^*_i,T],
   \end{aligned}
      \right.
  $$
where $i=1,2$, and let
 \begin{equation*}
    \bar{w}_i(x,t)=w_i(x,t)+u_i(t)\quad {\rm in} \,~ \Omega\times[0,T].
     \end{equation*}
Then
$$
\frac{\partial^{\alpha}\bar{w}_2}{\partial t^{\alpha}}- \frac{\partial^\beta \bar{w}_2}{\partial |x|^\beta}\leq L (w_1+w_2) + (1+D_2 C(x) )\frac{ \partial^{\alpha} u_2(t)}{\partial t^{\alpha}},
$$
where $C(x)\geq0$.
So at $(\hat{x}_2, \hat{t}_2)$,  $\bar{w}_2$ satisfies
  \begin{equation*}
  \begin{aligned}
     \frac{\partial^{\alpha}\bar{w}_2(\hat{x}_2, \hat{t}_2)}{\partial t^{\alpha}}- \frac{\partial^\beta \bar{w}_2(\hat{x}_2, \hat{t}_2)}{\partial |x|^\beta}
    \leq  2L w_2(\hat{x}_2, \hat{t}_2) +(1+D_2 C(x) )\frac{ \partial^{\alpha} u_2(\hat{t}_2)}{\partial t^{\alpha}} .
  \end{aligned}
     \end{equation*}
While    $\frac{\partial^{\alpha} \bar{w}_2 (\hat{x}_2,t)}{\partial t^{\alpha}}|_{t=\hat{t}_2}\geq0$,
  $- \frac{\partial^\beta \bar{w}_2 (x,\hat{t}_2)}{\partial |x|^\beta}|_{x=\hat{x}_2}\geq0$,
and since
 \begin{equation*}
 \frac{ \partial^{\alpha} u_2(t)}{\partial t^{\alpha}}|_{t=\hat{t}_2} < -\frac{w_2(\hat{x}_2,\hat{t}_2)}{2\Gamma(1-\alpha)}\hat{t}^{-\alpha}_2,
\end{equation*}
we get
\begin{equation*}
    2L w_2(\hat{x}_2,\hat{t}_2)+\frac{ \partial^{\alpha} u_2(\hat{t}_2)}{\partial t^{\alpha}}<0,
     \end{equation*}
for $T\leq(\frac{1}{4L\Gamma(1-\alpha)})^\frac{1}{\alpha}$.  This gives a contradiction, so $w_2\equiv 0$.
Now going back to $\bar{w}_1$, we know that
  \begin{equation*}
    \frac{\partial^{\alpha}\bar{w}_1}{\partial t^{\alpha}}- \frac{\partial^2 \bar{w}_1}{\partial x^2}\leq L (w_1+w_2)+(1+D_1 C(x))\frac{ \partial^{\alpha} u_1(t)}{\partial t^{\alpha}}= L w_1 +(1+D_1 C(x))\frac{ \partial^{\alpha} u_1(t)}{\partial t^{\alpha}} .
     \end{equation*}
Then $w_1\equiv0$, when $T\leq(\frac{1}{2L\Gamma(1-\alpha)})^\frac{1}{\alpha}$.
Repeating the same process as done in \cite{yu2013}, we have $w_1=w_2$ in $ \overline{\Omega}_T$ for any $T$.
So  $\bar{u}_1=\underbar{u}_1$  and   $\bar{u}_2=\underbar{u}_2$.  Set
$$
u_i(x,t)=\bar{u}_i=\underbar{u}_i,\,\, i=1,2.
$$
Then $u(x,t)=(u_1,u_2)$ solves (\ref{4.3}). And the uniqueness of the solution can be similarly proved as \cite{yu2013}.
\end{proof}

From \cite{yu2013}, we know that the monotone properties of the nonlinear terms $f_1(\cdot,\cdot)$  and $f_2(\cdot,\cdot)$ of (\ref{2.1}) defined in (\ref{Nonl1}) and (\ref{Nonl2}), i.e., $f_1(\cdot,\cdot)$ is quasi-monotone decreasing, $f_2(\cdot,\cdot)$ is quasi-monotone increasing.
Now we take
 $$U(x,t)=(\tilde{u}_1(x,t),\tilde{u}_2(x,t))=(1,L_1),\,\,(L_1>1/\gamma)$$
  and
  $$V(x,t)=(\utilde{u}_1(x,t),\utilde{u}_2(x,t))=(0,0).$$
  By calculating, we know  $U(x,t)$ and $V(x,t)$ satisfy the inequalities  (\ref{4.4.3}) and (\ref{4.4.4}).
If we specify the initial   condition of (\ref{2.1}) such that the given initial $ N(x,0)\in[0,1]$ and  $P(x,0)\in[0, L_1]$ for any $x\in(l,r)$, then the initial conditions satisfy (\ref{4.4.2}).
Thus we know that $V(x,t)=(0,0)$ and $ U(x,t)=(1,L_1)$  are lower and upper solutions of (\ref{2.1})-(\ref{2.3}); and then from Theorem \ref{Th4.2}, that the analytical solutions of (\ref{2.1})-(\ref{2.3}) are positive and bounded is obtained.

\subsection{Positivity and boundedness of the numerical solutions}
In this subsection, we show that the numerical schemes (\ref{2.8}) can preserve the positivity and boundedness of the corresponding analytical solutions, i.e.,
for any $i$ and $k$,   $0<N_i^k\leq1$ and $0<P_i^k\leq L_1$. Let the initial conditions satisfy $0<N_i^0\leq1$ and $0<P_i^0\leq L_1$.
 According to mathematical induction, we need to show  that, if it holds for $N_i^{k}$ and $P_i^{k}$,
then it also holds for $N_i^{k+1}$ and $P_i^{k+1}$. This can be done as follows.
From \cite{yu2013}, we have the following estimates
$$
-\varrho N_i^k\leq f_1(N_i^k,P_i^k)\leq1-N_i^ k,
$$
$$
-\sigma\delta P_i^k\leq f_2(N_i^k,P_i^k)\leq\sigma(1-\gamma P_i^k),
$$
with $\mu\leq1$,
and
$$
(1-b_1)N_i^k\leq\sum_{n=0}^{k-1}(b_n-b_{n+1})N_i^{k-n}+b_kN_i^0\leq \frac{1-b_1}{2}N_i^k+\frac{1+b_1}{2},
$$
$$
(1-b_1)P_i^k\leq\sum_{n=0}^{k-1}(b_n-b_{n+1})P_i^{k-n}+b_kP_i^0\leq \frac{1-b_1}{2}P_i^k+\frac{1+b_1}{2}L_1.
$$
Together with the above estimates and the numerical scheme (\ref{2.8}), we obtain
\begin{equation}\label{4.10}
0<N_{i}^{k+1}+\frac{D_1\mu}{h^\beta}\sum_{j=-M+i}^ig_jN_{i-j}^{k+1}\leq1
\end{equation}
for $\mu<\frac{1-b_1}{\varrho}$, and
\begin{equation}\label{4.11}
0<P_{i}^{k+1}+\frac{D_2\mu}{h^\beta}\sum_{j=-M+i}^ig_j P_{i-j}^{k+1}\leq L_1
\end{equation}
for $\mu<\frac{1-b_1}{\sigma \delta}$.
The inequalities above can  yield  $0<N_i^{k+1}\leq 1$ and $0<P_i^{k+1}\leq L_1$. In fact, if $0<N_i^{k+1}\leq 1$ does't hold, then there exists some $i$
such that  solution $N_i^{k+1}$ satisfies
$$
 N_i^{k+1}\leq0\quad or \quad N_i^{k+1}> 1.
$$

Case 1. If $N_i^{k+1}\leq0$, then we choose the minimum in $\{N_i^{k+1}\}_{i=1}^{M-1}$. Denote it as $N_s^{k+1}$  which is non-positive.
Along with the left inequality of (\ref{4.10}), there exists
$$
\begin{aligned}
  N_{s}^{k+1}&+\frac{D_1\mu}{h^\beta}\sum_{j=-M+s+1}^{s-1}g_jN_{s-j}^{k+1}+\frac{D_1\mu g_s}{3h^\beta}(4N_1^{k+1}- N_2^{k+1})\\\\
  &+\frac{D_1\mu g_{-M+s}}{3h^\beta}(4N_{M-1}^{k+1}-N_{M-2}^{k+1})>0.
\end{aligned}
$$
Then we have
\begin{equation}\label{4.12}
\begin{aligned}
  \left(1+\frac{D_1\mu g_0}{h^\beta}\right)N_{s}^{k+1}
 >&\frac{D_1\mu}{h^\beta}\sum_{j=-M+s+1\atop j\neq0}^{s-1}-g_jN_{s-j}^{k+1}-\frac{4D_1\mu g_s}{3h^\beta}N_1^{k+1}\\\\
 &-\frac{4D_1\mu g_{-M+s}}{3h^\beta}N_{M-1}^{k+1}+\frac{D_1\mu g_s}{3h^\beta}N_2^{k+1}+\frac{D_1\mu g_{-M+s}}{3h^\beta}N_{M-2}^{k+1}.
\end{aligned}
\end{equation}
When $s\neq 2,M-2$, we rewrite the inequality in the following form
$$
\begin{aligned}
 &\left(1+\frac{D_1\mu g_0}{h^\beta}\right)N_{s}^{k+1} \\\\
  >&\frac{D_1\mu}{h^\beta}\sum_{j=-M+s+1\atop j\neq0,s-2,-M+s+2}^{s-1}-g_jN_{s-j}^{k+1}-\frac{4D_1\mu g_s}{3h^\beta}N_1^{k+1}-\frac{4D_1\mu g_{-M+s}}{3h^\beta}N_{M-1}^{k+1}\\\\
  &+\frac{D_1\mu }{h^\beta}(\frac{ g_s}{3}-g_{s-2})N_2^{k+1}+\frac{D_1\mu }{h^\beta}(\frac{ g_{-M+s}}{3}-g_{-M+s+2})N_{M-2}^{k+1}.
\end{aligned}
$$
Since $-g_s\leq -g_{s-2}$ and $-g_{-M+s}\leq -g_{-M+s+2}$, then $\frac{ g_s}{3}-g_{s-2}\geq 0$ and  $\frac{ g_{-M+s}}{3}-g_{-M+s+2}\geq 0$.
In view of $-g_j\geq0$ for any $j\neq 0$, we get
$$
\begin{aligned}
 &\left(1+\frac{D_1\mu g_0}{h^\beta}\right)N_{s}^{k+1}\\\\
  >&\frac{D_1\mu}{h^\beta}\sum_{j=-M+s+1\atop j\neq0,s-2,-M+s+2}^{s-1}-g_jN_{s}^{k+1}-\frac{4D_1\mu g_s}{3h^\beta}N_s^{k+1}-\frac{4D_1\mu g_{-M+s}}{3h^\beta}N_{s}^{k+1}\\\\
  &+\frac{D_1\mu }{h^\beta}(\frac{ g_s}{3}-g_{s-2})N_s^{k+1}+\frac{D_1\mu }{h^\beta}(\frac{ g_{-M+s}}{3}-g_{-M+s+2})N_{s}^{k+1}.
\end{aligned}
$$
This implies
$$
\left(1+\frac{D_1\mu  }{h^\beta}\sum_{j=-M+s}^sg_j\right)N_{s}^{k+1}>0.
$$
Combining with the properties of $g_i$,   we know $1+\frac{D_1\mu}{h^\beta}\sum\limits_{j=-M+s}^sg_j>0$. So   $N_{s}^{k+1}>0$.
This contradicts the assumption.

If $s=2$, then along with the inequality (\ref{4.12}) we have
$$
\begin{aligned}
 &\left(1+\frac{D_1\mu g_0}{h^\beta}\right)N_{2}^{k+1}-\frac{D_1\mu g_2}{3h^\beta}N_2^{k+1}\\\\
 >&\frac{D_1\mu}{h^\beta}\sum_{j=-M+1\atop j\neq0}^{1}-g_jN_{2-j}^{k+1}-\frac{4D_1\mu g_2}{3h^\beta}N_1^{k+1}-\frac{4D_1\mu g_{-M+2}}{3h^\beta}N_{M-1}^{k+1}+\frac{D_1\mu g_{-M+2}}{3h^\beta}N_{M-2}^{k+1}.
\end{aligned}
$$
After   calculating, we get
$$
 \left(1+\frac{D_1\mu  }{h^\beta}\sum_{j=-M+2  }^{2 }-g_j\right)N_{2}^{k+1} >0  .
$$
Repeating the arguments above, we know that there exists a contradiction.
If $s=M-2$, the similar argument works.

Case 2. If $N_i^{k+1}>1$, then there exists a maximum  $N_s^{k+1}$ in $\{N_i^{k+1}\}_{i=1}^{M-1}$.
Since the right inequality of (\ref{4.10}) implies
$$
N_{s}^{k+1}+\frac{D_1\mu}{h^\beta}\sum_{j=-M+s}^sg_jN_{s-j}^{k+1}\leq1,
$$
we have
$$
\left(1+\frac{D_1\mu}{h^\beta}\right)N_{s}^{k+1}\leq1+\frac{D_1\mu}{h^\beta}\sum_{j=-M+s\atop j\neq0}^s-g_jN_{s}^{k+1}.
$$
Rewriting the inequality, it follows that
$$
\left(1+\frac{D_1\mu}{h^\beta}\sum_{j=-M+s}^sg_j\right)N_{s}^{k+1}\leq1.
$$
Noting that $1+\frac{D_1\mu}{h^\beta}\sum\limits_{j=-M+s}^sg_j>1$,  $N_s^{k+1}$ needs to satisfy $N_s^{n+1}<1$, which is contradictory with the assumption.

The argument for $P_i^{k+1}$ is similar to the discussion of $N_i^{k+1}$.

\section{Numerical experiments}

To verify the above theoretical results, we present some numerical results of the two numerical schemes (fractional centered difference scheme and WSGD scheme) with Neumann boundary.
And the initial conditions are taken as \cite{ana2011}
\begin{align}
&N(x,0)=0.113585+0.0214cos(\pi x),\nonumber\\
&P(x,0)=0.471397+0.0066cos(\pi x).\nonumber
\end{align}
In this section, we fix the parameters $\sigma=1,\,\,\varrho=1.1,\, \,\gamma=0.05, \,\,\kappa=1$, and $\delta=0.5$,
  use the diffusion coefficients $D_1=0.005$ and $D_2=0.2$ and take the domain $(0,1)\times(0,1)$.

In Figs \ref{fig1}-\ref{fig5}, we use the fractional centered difference scheme to show   different numerical solutions preserving the positivity and boundedness with the mesh $h=\tau=0.01$.
  We observe that the orders $\alpha$ and $\beta$ affect the shape of the solutions obviously.
Fig. \ref{fig1} shows the solutions of  classical predator-prey reaction-diffusion model with $\alpha=1$ and $\beta=2$.
 When $\alpha$ tends to 1 and $\beta$ to 2, the numerical solutions of the fractional predator-prey reaction-diffusion equations are also convergent to the solutions of the classical ones.

\begin{figure}[htp]
\begin{center}
  \includegraphics[width=5in]{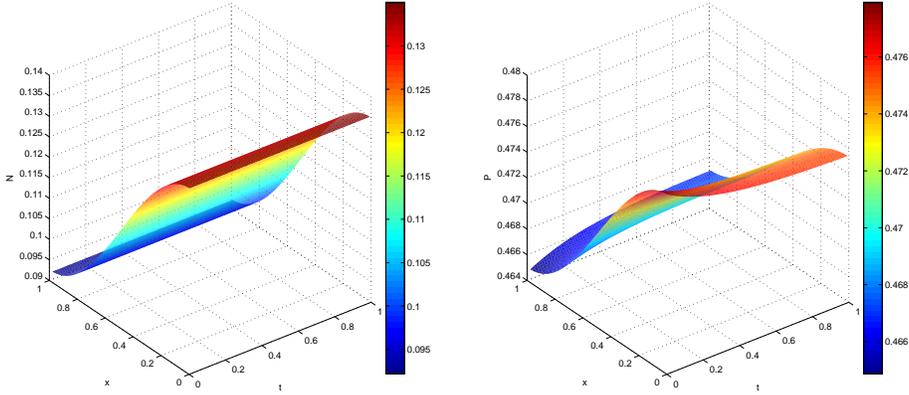}\\
  \caption{Numerical solution for the case $\alpha=1$ and $\beta=2$}\label{fig1}
  \end{center}
\end{figure}

\begin{figure}[htp]
\begin{center}
  \includegraphics[width=5in]{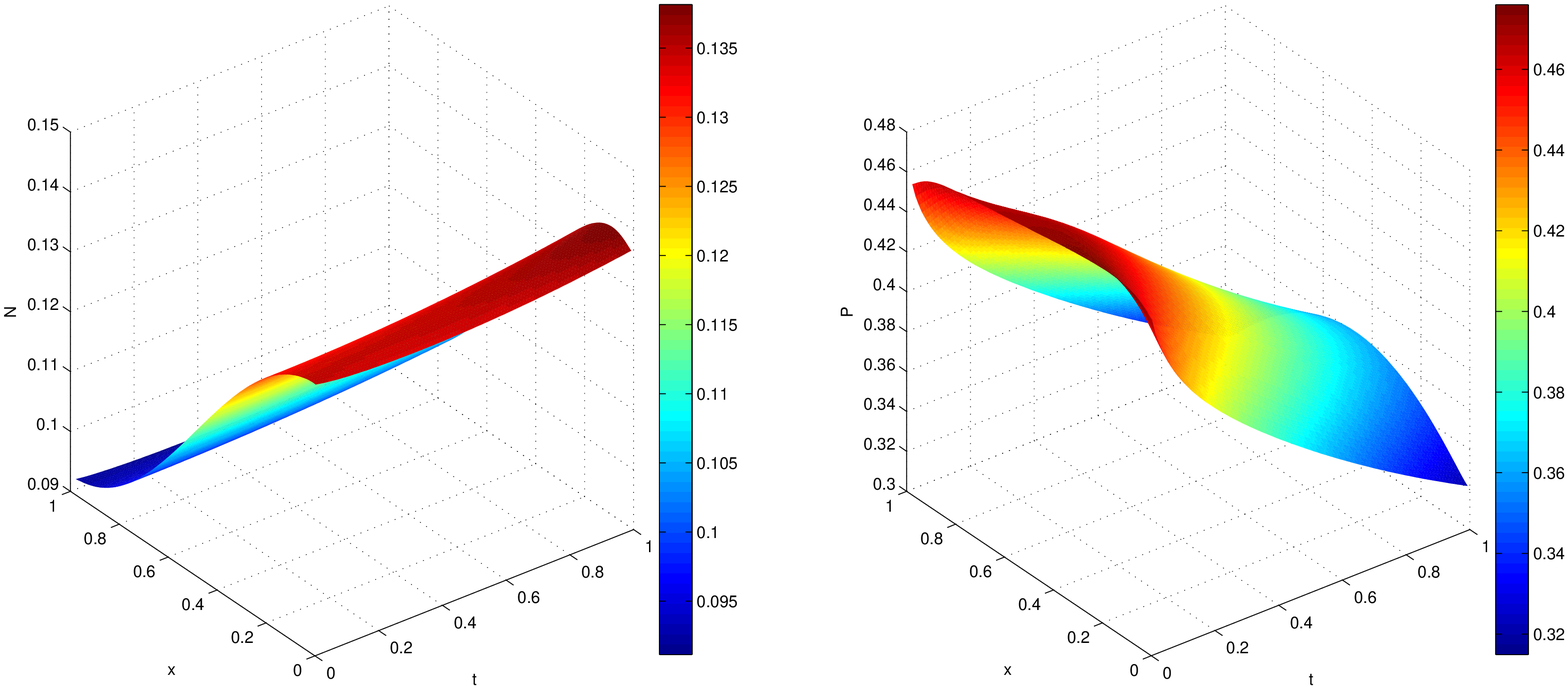}\\
  \caption{Numerical solution for the case $\alpha=1$ and $\beta=1.9$}\label{fig2}
  \end{center}
\end{figure}

\begin{figure}[htp]
\begin{center}
  \includegraphics[width=5in]{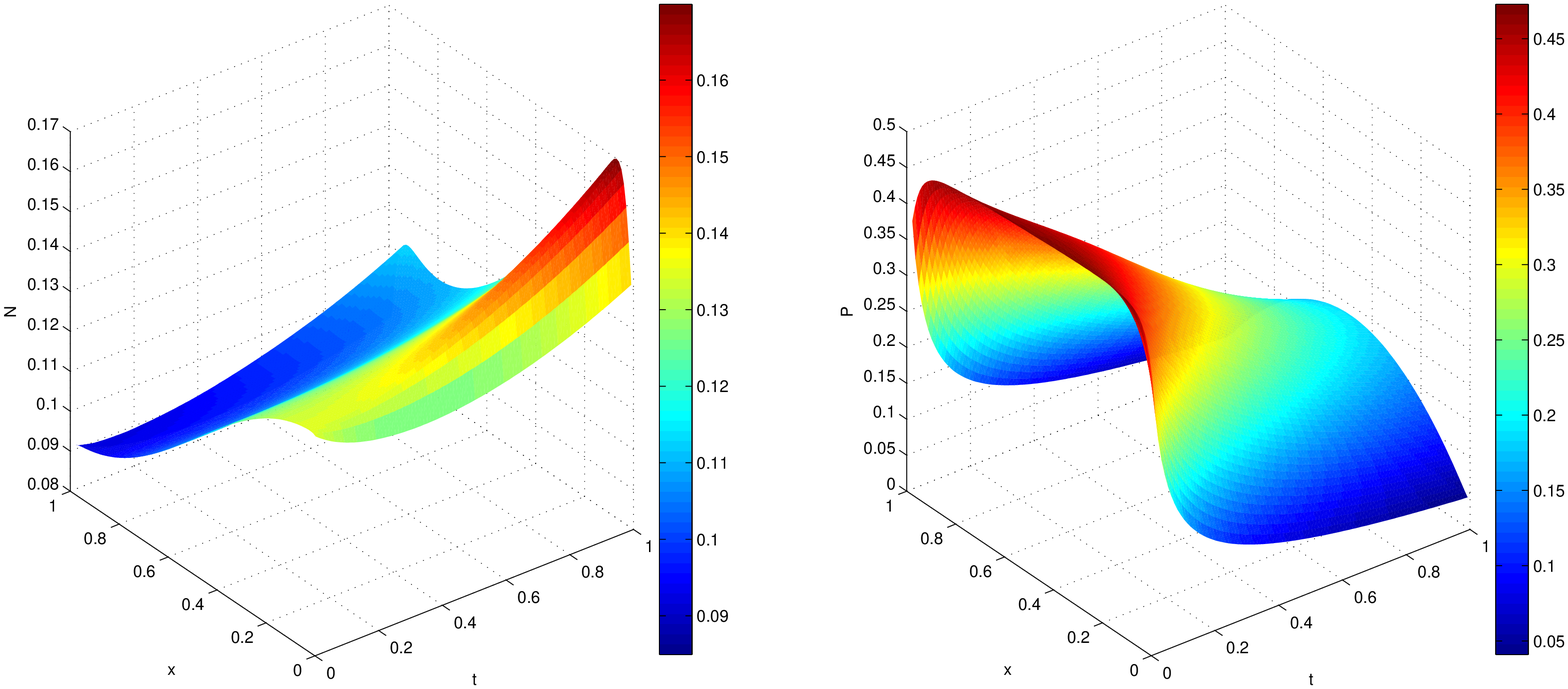}\\
  \caption{Numerical solution for the case $\alpha=1$ and $\beta=1.5$}\label{fig3}
  \end{center}
\end{figure}

\begin{figure}[htp]
\begin{center}
  \includegraphics[width=5in]{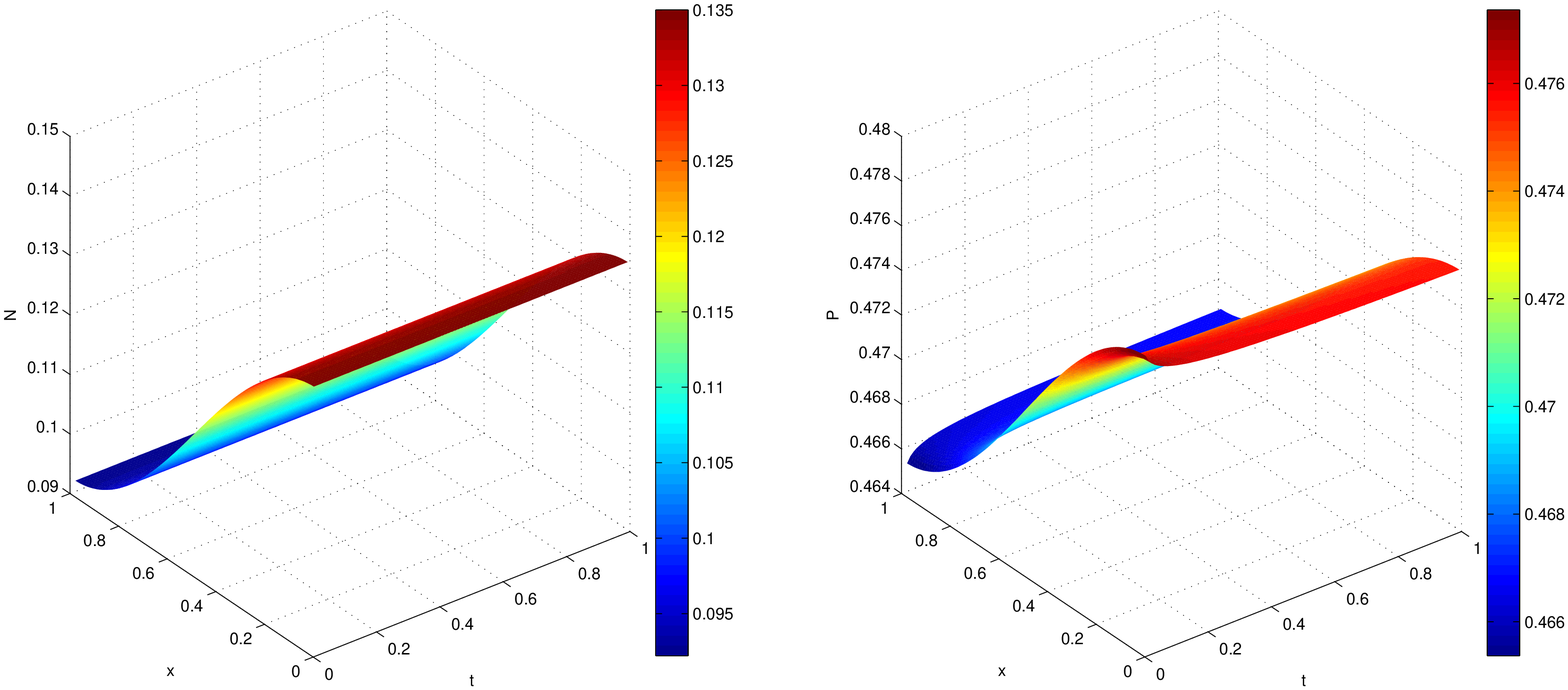}\\
  \caption{Numerical solution for the case $\alpha=0.5$ and $\beta=2$}\label{fig4}
  \end{center}
\end{figure}

\begin{figure}[htp]
\begin{center}
  \includegraphics[width=5in]{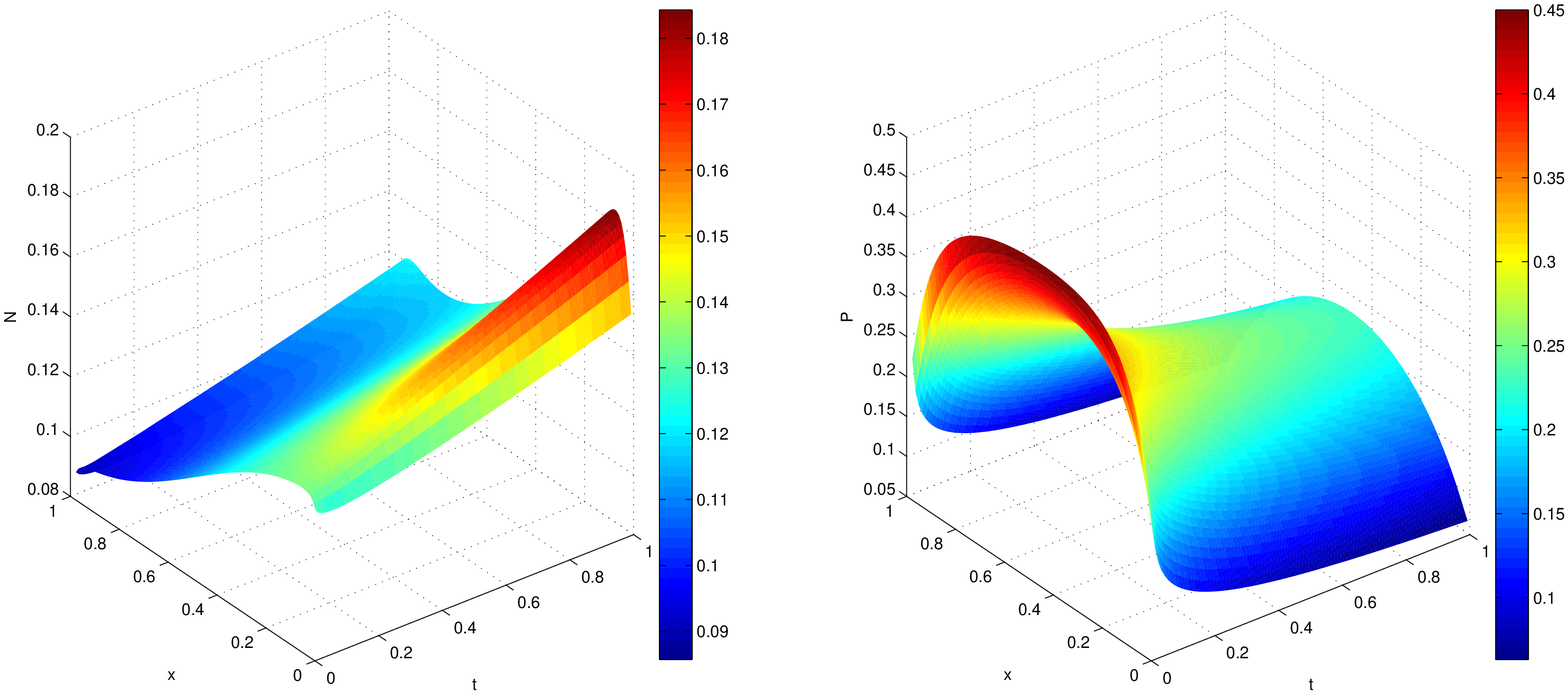}\\
  \caption{Numerical solution for the case $\alpha=0.5$ and $\beta=1.5$}\label{fig5}
  \end{center}
\end{figure}

To get the convergent order in time and space, we consider the nonhomogeneous equations:
\begin{equation*}
\begin{array}{llll}
\displaystyle\frac{\partial^\alpha N}{\partial t^\alpha} &= &D_1 \displaystyle\frac{\partial^\beta N}{\partial |x|^\beta}+N\left(1-N-\frac{\varrho P}{P+N}\right)+f(x,t),  \\
\\
\displaystyle\frac{\partial^\alpha P}{\partial t^\alpha} &= &D_2
\displaystyle\frac{\partial^\beta P}{\partial |x|^\beta}+\sigma P
\left(-\frac{\gamma+\kappa\delta   P}{1+\kappa P}+\frac{N}{P+N}\right)+g(x,t).
\end{array}
\end{equation*}
Suppose that the exact solution is
\[N(x,t)=P(x,t)=(t+1)^2x^2(1-x)^2,\]
and the nonhomogeneous terms are
\begin{equation*}
\begin{aligned}
  f(x,t)=&2(-1+x)^2x^2(\frac{t^{1-\alpha}}{\Gamma(2-\alpha)}  +\frac{t^{2-\alpha}}{\Gamma(3-\alpha)})\\\\
   &+\frac{D_1(1+t)^2}{\Gamma(5-\beta)}x^{-\beta}( \frac{(-1+x)^2x^\beta}{(1-x)^\beta}(12x^2-6x\beta+(-1+\beta)\beta)\\\\
  &+x^2(12(-1+x)^2+(-7+6x)\beta+\beta^2) )\sec(\frac{\pi\beta}{2})\\\\
& -(t+1)^2x^2(1-x)^2(1-(t+1)^2x^2(1-x)^2-\varrho/2),
  \end{aligned}
  \end{equation*}
 \begin{equation*}
\begin{aligned}
  g(x,t)=&2(-1+x)^2x^2(\frac{t^{1-\alpha}}{\Gamma(2-\alpha)}+\frac{t^{2-\alpha}}{\Gamma(3-\alpha)}) \\\\
 &+\frac{D_2(1+t)^2}{\Gamma(5-\beta)}x^{-\beta}( \frac{(-1+x)^2x^\beta}{(1-x)^\beta}(12x^2-6x\beta+(-1+\beta)\beta)\\\\
  & +x^2(12(-1+x)^2+(-7+6x)\beta+\beta^2) )\sec(\frac{\pi\beta}{2})\\\\
   &-\sigma(t+1)^2x^2(1-x)^2(-\frac{\gamma+\kappa\delta(t+1)^2x^2(1-x)^2}{1+\kappa(t+1)^2x^2(1-x)^2}+1/2).
  \end{aligned}
  \end{equation*}


Table \ref{table1} shows that the method has first-order accuracy for the time discretizations when $\alpha=0.5$ and $\beta=1.5$.
In the computations of the example, we take the spacial steplength $h=0.005$, which is small enough so that the error in space direction can be neglected for getting convergent rate.
Table \ref{table2}-\ref{table4} show  that the fractional centered difference scheme has second-order accuracy for different cases, and Table \ref{table5} and \ref{table6} show  that the WSGD scheme also has second-order accuracy.
In these cases, we consider the errors with temporal steplength  $\tau=0.0001$, which is small enough so that the error in time direction can be neglected for getting the convergent rate.

\begin{table}[ht]
\caption{The numerical errors and convergent orders for the time discretization with $\alpha=0.5
$ and $\beta=1.5$ (the space derivative is discretized by the fractional centered difference scheme).}\label{table1}
 {\begin{tabular}{@{}ccccc@{}} \toprule
 $\tau (h=0.005)$ & $e_N(h,\tau)$ &rate & $e_P(h,\tau)$ &rate  \\
\midrule
$0.1$ & $  0.003082147971456  $ &   $   $ & $  0.003879257478438$ &   $   $  \\
$0.05 $&  $ 0.001660319792980 $ &   $0.89247  $  &  $ 0.002045107336033 $ &   $0.92360  $  \\
$0.025$ &  $ 8.702233156421824e-04$  &    $ 0.93200 $   &  $0.001057717860272$  &    $ 0.95122 $  \\
$0.0125$ &  $  4.486628201542942e-04 $  &   $    0.95575 $  &  $  5.405498347547111e-04$  &   $ 0.96846$ \\ \bottomrule
\end{tabular}}
\end{table}

\begin{table} [ht]
\caption{The numerical errors and convergent orders of the fractional centered difference scheme with $\alpha=0.5
$ and $\beta=1.5.$}\label{table2}
{\begin{tabular}{@{}ccccc@{}} \toprule
 $h (\tau=0.0001)$ & $e_N(h,\tau)$ &order & $e_P(h,\tau)$ &order\\
 \midrule
$0.1$ & $ 0.002448567676975 $ &   $   $ & $ 0.012304879445640 $ &   $   $  \\
$0.05 $&  $ 4.333240674121927e-04$ &   $ 2.4984 $  &  $ 0.001897961774831$ &   $  2.6967 $  \\
$0.025$ &  $  4.078185842617405e-05$  &    $3.4094  $  &  $  1.770846542422655e-04 $  &    $  3.4219 $ \\
$0.0125$ &  $  7.936093047861137e-06$  &   $ 2.3614 $ &  $  2.735531025410687e-05$  &   $   2.6945 $\\
 \bottomrule
\end{tabular}}
\end{table}

\begin{table} [ht]
\caption{The numerical errors and convergent orders of the fractional centered difference scheme with $\alpha=0.5
$ and $\beta=1.1.$}\label{table3}
{\begin{tabular}{@{}ccccc@{}} \toprule
 $h (\tau=0.0001)$ & $e_N(h,\tau)$ &order & $e_P(h,\tau)$ &order\\
 \midrule
$0.1$ & $  0.001017521017230 $ &   $   $ & $ 0.004047202939302  $ &   $   $  \\
$0.05 $&  $ 1.098732300429083e-04$ &   $ 3.2174 $  &  $ 4.503780973269950e-04 $ &   $  3.1677  $  \\
$0.025$ &  $   1.518595837027292e-05 $  &    $ 2.8437 $  &  $   6.202433574753474e-05  $  &    $  2.8825 $ \\
 \bottomrule
\end{tabular}}
\end{table}

\begin{table} [ht]
\caption{The numerical errors and convergent orders of the fractional centered difference scheme with $\alpha=0.5
$ and $\beta=1.9.$}\label{table4}
{\begin{tabular}{@{}ccccc@{}} \toprule
 $h (\tau=0.0001)$ & $e_N(h,\tau)$ &order & $e_P(h,\tau)$ &order\\
 \midrule
$0.1$ & $  0.007646089314486 $ &   $   $ & $  0.044021358056344 $ &   $   $  \\
$0.05 $&  $ 0.002075492029463 $ &   $ 1.8813 $  &  $  0.010198249263744 $ &   $  2.1099  $  \\
$0.025$ &  $   4.102572803958426e-04 $  &    $ 2.3389 $  &  $  0.001878050620810 $  &    $ 2.4410  $ \\
$0.0125$ &  $  6.289181719891512e-05$  &   $ 2.7056 $ &  $  2.648747633180820e-04 $  &   $   2.8259 $\\
 \bottomrule
\end{tabular}}
\end{table}

\begin{table} [ht]
\caption{The numerical errors and convergent orders of the WSGD scheme with  $\alpha=0.5
$ and $\beta=1.1.$}\label{table5}
{\begin{tabular}{@{}ccccc@{}} \toprule
 $h (\tau=0.0001)$ & $e_N(h,\tau)$ &order & $e_P(h,\tau)$ &order\\
 \midrule
$0.1$ & $   8.143435340095817e-04 $ &   $   $ & $  0.003968570949633 $ &   $   $  \\
$0.05 $&  $  4.211307291067442e-04  $ &   $  0.95137  $  &  $  0.001501172252600 $ &   $  1.4025   $  \\
$0.025$ &  $    1.523958731684217e-04 $  &    $ 1.4664  $  &  $   6.213886912121139e-04 $  &    $ 1.2725 $ \\
$0.0125$ &  $  3.564037214530500e-05 $  &   $ 2.0962  $ &  $   2.003495389639826e-04$  &   $ 1.6330   $\\
 \bottomrule
\end{tabular}}
\end{table}

%

\begin{table} [ht]
\caption{The numerical errors and convergent orders of the WSGD scheme with $\alpha=0.5
$ and $\beta=1.9$ .}\label{table6}
{\begin{tabular}{@{}ccccc@{}} \toprule
 $h (\tau=0.0001)$ & $e_N(h,\tau)$ &order & $e_P(h,\tau)$ &order\\
 \midrule
$0.1$ & $ 0.017922317172642   $ &   $   $ & $  0.091921901959502  $ &   $   $  \\
$0.05 $&  $  0.005756704773643 $ &   $  1.6384   $  &  $   0.025183003437663 $ &   $  1.8680    $  \\
$0.025$ &  $  0.001233852922224   $  &    $  2.222    $  &  $  0.005115931266937  $  &    $  2.2994   $ \\
$0.0125$ &  $  0.000178819437800  $  &   $  2.7866   $ &  $    0.000661363526132  $  &   $     2.9515 $\\
 \bottomrule
\end{tabular}}
\end{table}

\section{Conclusion}
More than three decades ago, the classical diffusion is introduced into the predator-prey model, which leads to the predator-prey reaction-diffusion model. With the deep research on the anomalous diffusion, it is noticed that the anomalous diffusion is ubiquitous. It seems nature to introduce the anomalous diffusion to the predator-prey model, then we get the space-time fractional predator-prey reaction-diffusion model. This paper proves that the analytical solutions of the model are positive and bounded; and we provide the semi-implicit numerical schemes with the first-order accuracy in time and second-order accuracy in space. The proposed schemes are proven to be stable and preserve the positivity and boundedness. And the theoretical results are confirmed by numerical experiments.

\section*{Acknowledgements}
This work was supported by the National Natural Science Foundation of China under Grant No. 11271173.


\medskip

\medskip
\end{document}